\newcommand{\hiro}[1]{{\color{blue} \sf $\clubsuit\clubsuit\clubsuit$ Hiro: [#1]}}
\newtheorem{thm}{Theorem}[section]
\newtheorem{cor}[thm]{Corollary}
\newtheorem{prop}[thm]{Proposition}
\theoremstyle{definition}
\newtheorem{example}[thm]{Example}
\newtheorem{remark}[thm]{Remark}
\newtheorem{Lemma}[thm]{Lemma}
\theoremstyle{remark}
\numberwithin{equation}{section}
\def\Z{\ifmmode{{\mathbb Z}}\else{${\mathbb Z}$}\fi}
\def\Q{\ifmmode{{\mathbb Q}}\else{${\mathbb Q}$}\fi}
\def\C{\ifmmode{{\mathbb C}}\else{${\mathbb C}$}\fi}
\def\P{\ifmmode{{\mathbb P}}\else{${\mathbb P}$}\fi}
\def\H{\ifmmode{{\mathrm H}}\else{${\mathrm H}$}\fi}
\def\G{\ifmmode{{\mathbb G}}\else{${\mathbb G}$}\fi}
\def\R{\ifmmode{{\mathbb R}}\else{${\mathbb R}$}\fi}
\def\F{\ifmmode{{\mathbb F}}\else{${\mathbb F}$}\fi}
\def\O{\ifmmode{{\cal O}}\else{${\cal O}$}\fi}
\def\D{\ifmmode{{\cal{D}}^b}\else{${{\cal{D}}^b}$}\fi}
\newcommand{\calA}{{\mathcal A}}
\newcommand{\calB}{{\mathcal B}}
\newcommand{\calC}{{\mathcal C}}
\newcommand{\calO}{{\mathcal O}}
\DeclareMathOperator{\inv}{inv}
\DeclareMathOperator{\Gal}{Gal}
\DeclareMathOperator{\ev}{ev}
\DeclareMathOperator{\et}{et}
\DeclareMathOperator{\N}{N}
\DeclareMathOperator{\Br}{Br}
\newcommand{\A}{\mathbb{A}}
\newcommand{\isom}{\simeq}
\title{Weak approximation on Ch\^{a}telet surfaces}
\author{Masahiro Nakahara}
\author{Samuel Roven}
\subjclass[2020]{14G05 (primary), 14F22 (secondary)}
\begin{document}

\begin{abstract}
We study weak approximation for Ch\^{a}telet surfaces over number fields when all singular fibers are defined over rational points.  We consider Ch\^{a}telet surfaces which satisfy weak approximation over every finite extension of the ground field. We prove many of these results by showing that the Brauer-Manin obstruction vanishes, then apply results of Colliot-Th\'el\`ene, Sansuc, and Swinnerton-Dyer. 
\end{abstract}

\maketitle

\section{Introduction}

A Ch\^{a}telet surface over a number field $k$ is a smooth projective model of the affine surface given by the equation
\begin{equation}\label{eqn:Chatelet}
y^2-az^2=P(x)
\end{equation}
where $a\in k^\times\setminus k^{\times2}$ and $P(x)$ is a separable polynomial of degree $3$ or $4$.  The arithmetic of these surfaces have been studied extensively by Colliot-Th\'el\`ene, Sansuc, and Swinnerton-Dyer in \cites{CTSSD87,CTSSD87b}, where it was proven that over a number field, rational points on these surfaces are controlled by the Brauer--Manin obstruction. More precisely, if $X$ is a Ch\^{a}telet surface over a number field $k$, then $X$ may fail weak approximation, i.e., $X(k)$ is not dense in $X(\A)$, but it is always dense in the Brauer--Manin set $X(\A)^{\Br}$.

In this paper, we focus on weak approximation on Ch\^{a}telet surfaces. Since the Brauer group is largest when $P(x)$ splits into linear factors (see \S2), one might expect that weak approximation fails. In this case, we obtain a complete characterization for when $X$ fails weak approximation.

\begin{thm}\label{thm:SurjLinear}
Let $X$ be a Ch\^{a}telet surface over a number field $k$ with $X(k)\neq\emptyset$. Assume that $P(x)$ splits into linear factors. Then $X$ fails weak approximation if and only if either
\begin{enumerate}
    \item $X$ has a place $v$ of bad reduction where $a\notin k_v^{\times2}$ or,
    \item $k$ has a real embedding where $a<0$.
\end{enumerate}
\end{thm}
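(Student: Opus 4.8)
The plan is to translate the statement into a claim about local evaluation maps and then carry out the local analysis place by place. By the theorem of Colliot-Th\'el\`ene, Sansuc, and Swinnerton-Dyer recalled above, and since $X(k)\neq\emptyset$, the surface $X$ fails weak approximation if and only if $X(\A)^{\Br}\subsetneq X(\A)$. Because $X(k_v)\neq\emptyset$ for all $v$, the Brauer--Manin pairing $(P_v)_v\mapsto\sum_v\inv_v\calA(P_v)$ is a finite sum over $v$, and it is additive in $\calA$, this is equivalent to the existence of a class $\calA\in\Br(X)$ and a place $v$ such that the local evaluation $\ev_{\calA}\colon X(k_v)\to\Q/\Z$, $P_v\mapsto\inv_v\calA(P_v)$, is non-constant: a single non-constant local evaluation produces two adelic points with different global invariant sums, whereas if every $\ev_\calA$ is constant at every place then, comparing with a rational point, these constants sum to $0$ and $X(\A)^{\Br}=X(\A)$. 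Now $\Br(X)=\Br_1(X)$ since $\Xbar$ is rational, and by \S2 the group $\Br(X)/\Br(k)$ is generated by the quaternion classes $\calA_{ij}=\bigl(a,\tfrac{x-e_i}{x-e_j}\bigr)$ attached to the rational degenerate fibres of the conic bundle $X\to\P^1$ (these fibres sitting over the roots of $P$, and over $x=\infty$ when $\deg P=3$); since elements of $\Br(k)$ contribute only constant evaluations, it suffices to analyze the $\ev_{\calA_{ij}}$.

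First I would prove that if neither (1) nor (2) holds then $X$ satisfies weak approximation, by showing $\ev_{\calA_{ij}}$ is constant at every place. If $a\in k_v^{\times2}$ --- this covers all complex places, all real places with $a>0$, and every bad place not of type (1) --- then $\calA_{ij}$ vanishes in $\Br(k_v(X))$, so $\ev_{\calA_{ij}}$ is identically $0$. If $v$ is a finite place of good reduction with $a\notin k_v^{\times2}$, let $\calX/\calO_v$ be the smooth proper model; since $a\in\calO_v^\times$ and the roots of $P$ remain pairwise distinct modulo $\frakm_v$, the function $\tfrac{x-e_i}{x-e_j}$ is a unit at the generic point of the (irreducible) special fibre, so $\calA_{ij}$ is unramified there and extends to a class in $\Br(\calX)$ by purity. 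As $X(k_v)=\calX(\calO_v)$ and $\Br(\calO_v)=0$, the evaluation $\ev_{\calA_{ij}}$ is again identically $0$; this argument also covers the fibres over $x=\infty$, including the extra degenerate fibre present when $\deg P=3$.

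Conversely, I would exhibit a non-constant local evaluation at each place in the two families. Suppose $k$ has a real embedding $v$ with $a<0$, so $k_v=\R$ and $\inv_\R(a,b)$ equals $\tfrac12$ when $b<0$ and $0$ when $b>0$. Ordering the real roots $e_1<\dots<e_n$, the sign of $P$ on the complementary intervals is governed by the sign of its leading coefficient, and in each case ($n\in\{3,4\}$, either sign of the leading coefficient) one finds two intervals on which $P>0$: one of the form $(e_i,e_{i+1})$ and one on which $x-e_i$ and $x-e_{i+1}$ have a common sign. Taking real points with $x_0$ in these intervals and $y_0^2-az_0^2=P(x_0)>0$, the class $\calA_{i,i+1}$ takes the values $\tfrac12$ and $0$. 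Suppose instead $v$ is a finite place of bad reduction with $a\notin k_v^{\times2}$. Then I would produce $x_0\in k_v$ with $P(x_0)$ a norm from $k_v(\sqrt a)$ and $v(x_0-e_i)$ odd for some root $e_i$ admitting an auxiliary root $e_j$ with $v(x_0-e_j)=0$, together with a second point $x_0'$ at which $P(x_0')$ is a norm and all $v(x_0'-e_m)=0$; then $\ev_{\calA_{ij}}$ takes the values $\tfrac12$ and $0$. When bad reduction arises from a pair of roots colliding modulo $\frakm_v$ with $k_v(\sqrt a)/k_v$ unramified, one can take $x_0\equiv e_i\pmod{\frakm_v}$ with $v(x_0-e_i)=v(x_0-e_{i'})$ odd for a colliding root $e_{i'}$, so that $v(P(x_0))$ is even and $P(x_0)$ is automatically a norm; the remaining cases --- a prime above $2$ at which $k_v(\sqrt a)/k_v$ is ramified, or odd valuation of $a$ or of the leading coefficient of $P$ --- are handled by similar but more delicate computations, possibly after rescaling $x$.

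The step I expect to be the main obstacle is precisely this last one: arranging the required valuation and norm conditions on a genuine point of $X(k_v)$ uniformly across all the ways bad reduction can occur, and in particular at the primes above $2$, where the norm subgroup of $k_v^\times$ is not cut out by the valuation and the symbol $\inv_v(a,x_0-e_i)$ must be computed directly. A secondary point requiring care is the geometry of the model --- and the extension of the relevant Brauer classes --- over the fibre at $x=\infty$ when $\deg P=3$, which is where the hypothesis that all degenerate fibres are rational enters.
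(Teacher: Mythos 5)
Your global skeleton is the same as the paper's: reduce failure of weak approximation (via Colliot-Th\'el\`ene--Sansuc--Swinnerton-Dyer and $X(k)\neq\emptyset$) to non-constancy of the evaluation map of one of the explicit quaternion generators at a single place, dispose of places with $a\in k_v^{\times 2}$ trivially, of good-reduction places by a good-reduction/purity argument (the paper cites \cite{CTS13}*{Lemma 2.2}), and of real places with $a<0$ by the sign analysis you describe. Up to that point the proposal is fine.

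The gap is that at the finite bad places with $a\notin k_v^{\times 2}$ you only give an argument that can work in the unramified case, and you explicitly defer the rest --- but the deferred cases are the substance of the theorem. Your criterion ``$v(x_0-e_i)$ odd, $v(x_0-e_j)=0$, $P(x_0)$ a norm'' detects the symbol only when $k_v(\sqrt a)/k_v$ is unramified (Lemma~\ref{lem:HilbUnram}); when the extension is ramified, parity of valuations says nothing, evenness of $v(P(x_0))$ does not make $P(x_0)$ ``automatically a norm,'' and a unit value of $(x-e_i)/(x-e_j)$ can have nontrivial symbol, so the same recipe cannot be pushed through by ``similar but more delicate computations.'' The paper needs genuinely different inputs there: for odd ramified $v$ (Proposition~\ref{prop:linearoddram}) it restricts to the curve $z=0$, the elliptic curve $y^2=cx(x-1)(x-\lambda)$, and uses the $2$-descent criterion to find $Q\in E(k_v)\setminus 2E(k_v)$, which forces one of $x(x-1)$, $x(x-\lambda)$ to be a non-square unit and hence a non-norm (with a separate residue-field argument when $\lambda\equiv 0,1\bmod\pi$); for $v\mid 2$ ramified (Proposition~\ref{prop:linearevenram}) it develops the norm-equidistribution statements (Lemma~\ref{lem:nontrivialsymbol}, Lemma~\ref{lem:normdist}, Proposition~\ref{prop:limAB}) and combines them with $X(k_v)\neq\emptyset$ --- this is exactly where the hypothesis $X(k)\neq\emptyset$ enters the failure direction --- to show via Lemmas~\ref{lem:AmeetB} and~\ref{lem:AminusB} that both invariant values $0$ and $1/2$ are attained. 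Nothing in your sketch substitutes for either step. Even in the unramified case your sketch is thinner than the problem: bad reduction may come from odd $v(c)$ with no colliding roots, several roots may collide, and for small residue fields there may be no residue avoiding all roots, which is why Proposition~\ref{prop:linearunramA} runs through explicit choices such as $x=1/\pi$, $x=\pi$, $x=1+\pi$. So: right framework, but the hard half of the proof is missing.
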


See \S\ref{sec:split} for what we mean by bad reduction. Roughly, this translates to either $k_v(\sqrt{a})k_v$ being ramified or $P(x)$ being nonseparable modulo $v$.

These conditions suggest that we expect weak approximation to fail in general. This is in contrast to the case where $P(x)$ is irreducible or is a product of a linear and irreducible cubic where $X$ always satisfies weak approximation \cite{CTSSD87}*{Theorem B}. We also collect some partial results and show that the situation is not as simple in the remaining case when $P(x)$ is a product of two irreducible quadratics, see Examples \ref{ex:WAsatisfied}, \ref{ex:WAfailed}.

For Ch\^atelet surfaces, behavior of the Hasse principle or weak approximation under finite extensions have been studied in \cites{Lia18,Wu22,Rov22}. In \cites{Lia18,Wu22}, the authors give a Ch\^atelet surface over any arbitrary number field $k$, which satisfies weak approximation over $k$ but fails over some finite extension. On the other hand, there exist Ch\^atelet surfaces that satisfy weak approximation over every finite extension $K/k$ (e.g. when $X$ is rational). Using Theorem \ref{thm:SurjLinear}, we give a list of conditions for Ch\^{a}telet surfaces to fall into either category.

\begin{thm}\label{thm:BrauerVanishIntro}
Let $X$ be a Ch\^{a}telet surface over a number field $k$. Let $L$ be the splitting field of $P(x)$.
\begin{enumerate}
    \item $X$ fails weak approximation over some finite field extension if either (1) or (2) in Theorem \ref{thm:SurjLinear} is satisfied for $X_L$.
    \item $X$ satisfies weak approximation over all finite field extensions if any of the following conditions are satisfied, (Here $D_{2n}$ denotes the dihedral group of order $2n$)
    \begin{enumerate}
        \item if both (1) and (2) of Theorem \ref{thm:SurjLinear} fail for $X_L$ and $\Gal(L/k)\isom \Z/3\Z,\{1\}$,
        \item if $\Gal(L/k)\isom D_8,(\Z/2\Z)^2$ and $P(x)$ factors further over $k(\sqrt{a})/k$,
        \item if $\Gal(L/k)\isom D_6,\Z/4\Z,\Z/2\Z$ and $\sqrt{a}\in L$.
    \end{enumerate}
\end{enumerate}
\end{thm}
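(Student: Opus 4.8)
The plan is to obtain part (1) as an immediate consequence of Theorem~\ref{thm:SurjLinear}, and to prove part (2) by showing that for \emph{every} finite extension $K/k$ the Brauer group of $X_K$ equals its constant subgroup, and then invoking the theorem of Colliot-Th\'el\`ene, Sansuc, and Swinnerton-Dyer recalled in the introduction.

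\emph{Part (1).} Since $L$ is the splitting field of $P$, over $L$ the polynomial $P(x)$ is a product of linear factors; moreover $X_L(L)\neq\emptyset$, because any root $e\in L$ of $P$ gives the point $(x,y,z)=(e,0,0)$, which is a smooth point of the affine model (as $P$ is separable, $P'(e)\neq0$) and hence lies on $X_L$. Thus Theorem~\ref{thm:SurjLinear} applies to $X_L/L$, and its conclusion is exactly that $X_L$ fails weak approximation if and only if (1) or (2) holds for $X_L$. So if one of (1), (2) holds for $X_L$, then $X$ fails weak approximation over the finite extension $L/k$.

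\emph{Part (2), set-up.} Fix a finite extension $K/k$ and write $\Xbar=X_{\overline K}$. Since Ch\^atelet surfaces are geometrically rational, $\Br\Xbar=0$, so the Hochschild--Serre spectral sequence gives $\Br X_K/\Br_0 X_K\injects H^1(K,\Pic\Xbar)$; hence it suffices to show $H^1(K,\Pic\Xbar)=0$, for then $\Br X_K=\Br_0 X_K$ and $X_K(K)$ is dense in $X_K(\A_K)^{\Br}=X_K(\A_K)$ by the cited theorem. If $a\in K^{\times2}$, then, putting $u=y-\sqrt a\,z$ and $v=y+\sqrt a\,z$, $X_K$ is the smooth proper model of $uv=P(x)$; here $\Pic\Xbar$ is freely generated by the class of a fibre of the conic bundle $X_K\to\mathbb P^1_K$, a section (e.g.\ the closure of $\{u=1,\ v=P(x)\}$), and one component of each of the four geometric degenerate fibres, with the Galois group merely permuting the last four classes. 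Thus $\Pic\Xbar$ is a permutation module, $H^1(K,\Pic\Xbar)=0$, and $X_K$ satisfies weak approximation. So we may assume $a\notin K^{\times2}$, so that $X_K$ is again a Ch\^atelet surface and the Galois action on $\Pic\Xbar$ factors through $\Gal(M/k)$ with $M=L(\sqrt a)$.

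\emph{Part (2), reduction to a subgroup check.} The computation of $H^1(\Gal(M/k),\Pic\Xbar)\isom\Br X/\Br_0 X$ carried out in \S2 expresses this group, via the quaternion classes $(a,f)$ with $f$ running over the monic factors of $P$ over $k$, in terms of the factorization type of $P$ and of which products of those factors lie in the norm group of $k(\sqrt a)/k$; the same description holds over $K$, where now the Galois action factors through $\Gal(M_K/K)$ with $M_K=MK$, and $\Gal(M_K/K)$ is identified with the subgroup $\Gal(M/M\cap K)$ of $\Gal(M/k)$ (the lattice $\Pic\Xbar$ being unchanged). In particular $\Br X_K=\Br_0 X_K$ whenever the factorization type of $P$ over $K$, together with the relevant factors being norms from $K(\sqrt a)$, lies in the list of \S2 giving a trivial quotient — a list including the cases $P$ irreducible and $P=(\text{linear})(\text{irreducible cubic})$, i.e.\ Theorem~B of \cite{CTSSD87}. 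So part (2) reduces to checking, in each of (a), (b), (c), that for every finite $K/k$ with $a\notin K^{\times2}$ the pair $(P,a)$ over $K$ is of this good type.

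\emph{Part (2), the three cases and the main obstacle.} In case (a), $\Gal(LK/K)$ is a subgroup of $\Gal(L/k)\in\{\Z/3\Z,\{1\}\}$, hence equals $\Z/3\Z$ or is trivial. If it equals $\Z/3\Z$, then $P$ over $K$ is irreducible or a product of a linear and an irreducible cubic, and $X_K$ satisfies weak approximation by Theorem~B. If it is trivial, then $L\subseteq K$, and the failure of (1) and (2) for $X_L$ forces their failure for $X_K$ (a place of bad reduction of $K$ restricts to one of $L$ over which $a$ remains a non-square, since $L_w\subseteq K_w$, and a real place of $K$ with $a<0$ restricts to such a place of $L$), so $X_K$ satisfies weak approximation by Theorem~\ref{thm:SurjLinear}. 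In cases (b) and (c) the hypothesis forces $\sqrt a\in L$ (if $\sqrt a\notin L$ then $L\cap k(\sqrt a)=k$, and $P$ acquires no new factorization over $k(\sqrt a)$), so $M=L$ and $\Gal(M_K/K)\le\Gal(L/k)$; one then runs through the subgroups of $D_8$ and $(\Z/2\Z)^2$ (resp.\ $D_6$, $\Z/4\Z$, $\Z/2\Z$), reads off the possible factorization types of $P$ over $K$, and uses the extra hypothesis — $P$ factors further over $k(\sqrt a)$, resp.\ $\sqrt a\in L$ — to verify that each irreducible factor of $P$ over $K$, and the relevant products of such factors, are norms from $K(\sqrt a)$, so that the \S2 criterion yields $\Br X_K=\Br_0 X_K$. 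The main obstacle is exactly this last case analysis: one must control how base change to a general $K$ shrinks the Galois group — a shrinkage that could a priori re-introduce a nontrivial, obstructing Brauer class — and confirm that the stated condition on the interaction between the $D_8$- (or Klein four-) action on the roots of $P$ and the subfield $k(\sqrt a)$ rules this out for all of the several (including non-normal) subgroups that occur.
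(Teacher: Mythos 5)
Your parts (1) and (2)(a) are complete and essentially the paper's own argument: (1) is Theorem \ref{thm:SurjLinear} applied to $X_L$ (with the rational point supplied by a root of $P$), and in (2)(a) you split according to whether $\Gal(LK/K)$ is $\Z/3\Z$ (no quadratic factor, so $\Br X_K/\Br_0 X_K=0$ by Proposition \ref{Brauergroupgenerators} and \cite{CTSSD87}*{Theorem B} applies) or trivial (so $L\subseteq K$, conditions (1),(2) of Theorem \ref{thm:SurjLinear} cannot reappear over $K$ since good reduction and $a$ being a local square persist under base change, and Theorem \ref{thm:SurjLinear} gives weak approximation); your explicit check of that persistence is a point the paper leaves implicit.

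The gap is in (2)(b)(c). The paper proves these via Theorem \ref{thm:BrauerVanish}: for every finite $K/k$ with $\sqrt{a}\notin K$ one shows, by a case analysis on the factorization of $P$ over $K$ (two irreducible quadratics with $\Gal(LK/K)\cong\Z/2\Z$ or $(\Z/2\Z)^2$, or two linear factors and a quadratic), that either $P$ has no irreducible quadratic factor over $K$ or $\sqrt{a}$ lies in the residue field of a quadratic factor, whence $\Br X_K/\Br_0 X_K=0$ by Proposition \ref{Brauergroupgenerators}. You reduce to exactly this statement (and your remark that hypothesis (b) already forces $\sqrt{a}\in L$ is correct), but you then write that one ``runs through the subgroups \dots and verifies'' the required condition, and you yourself flag this verification as the main obstacle without carrying it out. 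That verification is the actual content of the theorem: for instance, when $\Gal(L/k)\cong(\Z/2\Z)^2$ and $L\cap K=k$ one must see that ``factors further over $k(\sqrt a)$'' forces one residue field over $K$ to be $K(\sqrt a)$, and when $\Gal(L/k)\cong D_8$ one must identify which Klein subgroup corresponds to $k(\sqrt a)$ and check that the potentially dangerous sub-case would force $\sqrt a\in K$. So as written the proof of (2)(b)(c) is not done. A smaller but relevant slip: the criterion from \S\ref{sec:Brauergroup} is whether $\sqrt a\in K(\alpha)$ for a root $\alpha$ of a quadratic factor, not whether the factors (or products of factors) lie in the norm group of $K(\sqrt a)/K$; phrasing the test in terms of norms is not the criterion for the class to be constant and would misdirect the very case check you deferred.
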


We prove (1) and (2)(a) as a direct corollary of Theorem \ref{thm:SurjLinear} in \S4 and the proof for (2)(b)(c) is given in \S2 where we discuss the Brauer group of Ch\^atelet surfaces.

For (2)(b)(c), the conditions in fact imply that $\Br X_K/\Br_0 X_K=0$ for all finite extensions $K/k$. This is enough to conclude that $X_K$ satisfies weak approximation. In fact, when $\Gal(L/k)\isom D_6$ and $L$ contains $\sqrt{a}$, then $X$ is stably rational \cite{BCTSSD}*{Theorem 1}, from which it follows in particular that $X$ satisfies perpetual weak approximation. In the instances when $\sqrt{a}$ is contained in the residue field of one of the factors of $P(x)$, $X$ is rational.

In (2)(a), $\Br X_L/\Br_0 X_L\isom(\Z/2\Z)^2$ so in particular $X$ is not rational. See Example \ref{ex:perp} for a surface in this category.

In all the instances not covered by Theorem \ref{thm:BrauerVanishIntro}, there is a subextension $K/k$ inside $L$ such that $P(x)$ factors into a product of two irreducible quadratics over $K$ and $\sqrt{a}\in L$. Then $X$ satisfies perpetual weak approximation if and only if $X_K$ satisfies weak approximation. 


\subsection{Acknowledgments}
We are grateful to Bianca Viray for helpful discussions and comments. We also thank Daniel Loughran, Jean-Louis Colliot-Th\'el\`ene for comments on the initial draft of the paper.

\section{Cyclic algebras over local fields}

In this section let $k$ be a nonarchimedian local field containing an $n$th roots of unity $\zeta$. Let $\calO$ be its ring of integers with uniformizer $\pi\in\calO$ and $\F_q=\calO/(\pi)$. Recall that the invariant map gives an isomorphism $\inv\colon \Br k\to \Q/\Z$.

For $a,b\in k^\times$, let $(a,b)_\zeta=(a,b)_{\zeta}\in \Br k$ be the class of the cyclic algebra as defined in \cite{GS17}*{\S2.5}. When $n=2$, we will simply write $(a,b)=(a,b)_{-1}$.

\begin{Lemma}\label{lem:HilbUnram}
If $k(\sqrt[n]{a})/k$ is unramified, then for any $b\in k^{\times}$, $\inv (a,b)_\zeta=v_\pi(b)/n$.
\end{Lemma}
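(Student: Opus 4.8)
The plan is to use that $(a,-)_\zeta\colon k^\times\to\Br k$ is a homomorphism, so as to reduce to the cases $b\in\calO^\times$ and $b=\pi$, and then to compute each by local class field theory. As a preliminary reduction, note that $(a,b)_\zeta$ is unchanged when $a$ is multiplied by an $n$th power, and that the hypothesis that $K\coloneqq k(\sqrt[n]{a})$ is unramified over $k$ forces $n\mid v_\pi(a)$: the valuation extends to $K$ with unchanged value group, so $v_\pi(a)/n=v_\pi(\sqrt[n]{a})\in\Z$. Hence I may assume $a\in\calO^\times$, so that $K$ is the unramified extension of $k$ of degree $n$. Writing $b=\pi^{v_\pi(b)}u$ with $u\in\calO^\times$, homomorphy gives
\[
(a,b)_\zeta=v_\pi(b)\cdot(a,\pi)_\zeta+(a,u)_\zeta\qquad\text{in }\Br k .
\]

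Next I would show $(a,u)_\zeta=0$. Since $K/k$ is unramified, the norm $\Norm_{K/k}\colon\calO_K^\times\to\calO^\times$ is surjective (surjectivity on residue fields, and then on principal units by successive approximation), so $u\in\Norm_{K/k}(K^\times)$; since a cyclic algebra for $K/k$ is split by every element of $\Norm_{K/k}(K^\times)$, we conclude $(a,u)_\zeta=0$. (Equivalently, $(a,u)_\zeta$ extends to an Azumaya algebra over $\calO$ and $\Br\calO=\Br\F_q=0$.)

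The crux is the evaluation $\inv(a,\pi)_\zeta=1/n$. Here $(a,\pi)_\zeta$ is the cyclic algebra $(K/k,\sigma,\pi)$, with $\sigma$ the generator of $\Gal(K/k)$ determined by $\sigma(\sqrt[n]{a})=\zeta\sqrt[n]{a}$. Because $\Norm_{K/k}(K^\times)=\pi^{n\Z}\calO^\times$ for the unramified degree-$n$ extension (using $\Norm_{K/k}(\calO_K^\times)=\calO^\times$ and $\Norm_{K/k}(\pi)=\pi^{n}$), the element $\pi^m$ is a norm from $K$ precisely when $n\mid m$, so $(a,\pi)_\zeta$ has order exactly $n$ in $\Br k\isom\Q/\Z$; thus $\inv(a,\pi)_\zeta=c/n$ with $\gcd(c,n)=1$, and for $n=2$ this already forces $c=1$. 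To pin down $c$ in general I would identify $\sigma$ with Frobenius: local reciprocity gives $\theta_{K/k}(\pi)=\Frob_{K/k}$, and reducing modulo the maximal ideals (so $\Frob_{K/k}$ acts on $\F_{q^n}$ by $x\mapsto x^q$, with $\calO_K=\calO[\sqrt[n]{a}]$) lets one read off the power of $\zeta$ carrying $\sqrt[n]{a}$ to $\Frob_{K/k}(\sqrt[n]{a})$, giving $\sigma=\Frob_{K/k}$ and hence $\inv(a,\pi)_\zeta=1/n$. Assembling the three pieces, $\inv(a,b)_\zeta=v_\pi(b)/n$.

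I expect the only real difficulty to lie in this last identification: one must normalize the local Artin map compatibly with the definition of $(a,b)_\zeta$ in \cite{GS17}*{\S2.5} — tracking the arithmetic-versus-geometric Frobenius convention and the role of $\zeta$ in Kummer theory — and then check carefully that the distinguished generator $\sigma$ really is $\Frob_{K/k}$. Everything else is routine once one has surjectivity of the norm on units in an unramified extension.
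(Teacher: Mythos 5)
Your overall structure is sound and close to the standard argument (the paper itself gives no argument at all: it simply cites Serre's \emph{Local class field theory}, where the result appears as the formula $\inv(\chi,b)=v_\pi(b)\,\chi(\Frob)$ for an unramified character $\chi$). The reduction to $b\in\calO^\times$ and $b=\pi$, and the unit case via surjectivity of the norm on units in an unramified extension, are correct. The genuine gap is exactly at the step you flagged as the ``only real difficulty'': the asserted identification $\sigma=\Frob_{K/k}$. Local reciprocity does give $\theta_{K/k}(\pi)=\Frob_{K/k}$, but reducing modulo the maximal ideal only tells you $\Frob_{K/k}(\sqrt[n]{a})=\zeta^{m}\sqrt[n]{a}$ with $\zeta^{m}\equiv a^{(q-1)/n}\pmod{\pi}$; nothing in the hypothesis forces $m=1$, and no argument can force it: replacing $a$ by $a^{j}$ with $\gcd(j,n)=1$ leaves $K=k(\sqrt[n]{a})$ and the unramifiedness hypothesis unchanged, yet multiplies $(a,\pi)_\zeta$ by $j$. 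So for $n\ge 3$ the literal conclusion $\inv(a,b)_\zeta=v_\pi(b)/n$ does not follow from the hypothesis (the correct general statement is $\inv(a,b)_\zeta=m\,v_\pi(b)/n$, i.e.\ $v_\pi(b)\chi_a(\Frob)$ for the Kummer character $\chi_a$), and your final step cannot be repaired as written. A smaller point of the same kind: after reducing to $a\in\calO^\times$ you assert $[K:k]=n$, which also need not hold (e.g.\ if $a$ is a proper power), and is needed for your ``order exactly $n$'' computation.

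That said, your own remark that the order computation already forces $c=1$ when $n=2$ does close the argument in the quadratic case: for a nonsquare unit $a$ one automatically has $\Frob(\sqrt{a})=-\sqrt{a}=\zeta\sqrt{a}$, so $\sigma=\Frob$ and $\inv(a,b)=v_\pi(b)/2$. Since the paper only ever invokes this lemma for quaternion algebras ($n=2$, with $a\notin k_v^{\times2}$), your proof is complete for every application in the text; but as a proof of the lemma as stated for general $n$ it has the gap above — equivalently, the lemma should be read either with $n=2$ or with the extra normalization $a^{(q-1)/n}\equiv\zeta\pmod{\pi}$, which is in effect what the cited formula of Serre encodes through $\chi(\Frob)$.
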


\begin{proof}
\cite{Ser86}*{\S2.5 Proposition 2}
\end{proof}

\begin{Lemma}\label{lem:nontrivialsymbol}
Assume the characterisitic of $\F_q$ is even. If $k(\sqrt{a})/k$ is ramified, then there exists $u\in\calO$ such that $\inv (a, 1-\pi u)=1/2$.
\end{Lemma}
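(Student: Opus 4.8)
The plan is to reformulate the statement as a claim about the norm group of the ramified quadratic extension $K:=k(\sqrt{a})$ and then settle it by an index count, with the hypothesis $\Char\F_q=2$ entering only at the final step to control the prime-to-$2$ torsion of $\calO^\times$. First I would recall that $(a,b)=(a,b)_{-1}$ is the quaternion algebra attached to $K/k$, hence splits in $\Br k$ exactly when $b\in\N_{K/k}(K^\times)$; since $\inv\colon\Br k\to\Q/\Z$ is injective and quaternion classes are $2$-torsion, this gives $\inv(a,b)\in\{0,1/2\}$ with $\inv(a,b)=1/2$ if and only if $b\notin\N_{K/k}(K^\times)$. As $\{\,1-\pi u:u\in\calO\,\}$ is precisely the group $U_1:=1+\pi\calO$ of principal units, the lemma becomes the assertion that $U_1\not\subseteq\N_{K/k}(K^\times)$.

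Next I would establish that $[\,\calO^\times:\calO^\times\cap\N_{K/k}(K^\times)\,]=2$. By local class field theory $[\,k^\times:\N_{K/k}(K^\times)\,]=[K:k]=2$. Since $K/k$ is a ramified quadratic extension it is totally ramified of degree $2$, so a uniformizer $\pi_K$ of $K$ satisfies an Eisenstein equation over $k$ and $\N_{K/k}(\pi_K)$ is a uniformizer of $k$; thus $\N_{K/k}(K^\times)$ contains an element of valuation $1$, whence $k^\times=\N_{K/k}(K^\times)\cdot\calO^\times$ and the claimed index equals $2$.

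To finish, write $H:=\calO^\times\cap\N_{K/k}(K^\times)$ and use the Teichm\"uller splitting $\calO^\times=\mu_{q-1}\times U_1$. Because $\Char\F_q=2$, the order $q-1$ is odd, so there is no nontrivial homomorphism $\mu_{q-1}\to\calO^\times/H$; hence $\mu_{q-1}\subseteq H$, so $H=\mu_{q-1}\times(U_1\cap H)$ and $[\,U_1:U_1\cap H\,]=[\,\calO^\times:H\,]=2$. In particular $U_1\not\subseteq\N_{K/k}(K^\times)$, and by the first paragraph some $1-\pi u$ with $u\in\calO$ satisfies $\inv(a,1-\pi u)=1/2$.

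The main obstacle is really just locating where the hypothesis is used: everything before the last step works for any ramified quadratic extension, and indeed in odd residue characteristic such an extension is tame with $U_1\subseteq\N_{K/k}(K^\times)$, so the statement fails there. One should double-check that no input about wild ramification is needed, but since the norm group is used only through its index $2$, wild ramification never enters and the argument stays elementary.
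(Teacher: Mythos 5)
Your proposal is correct, and it rests on the same two pillars as the paper's proof---local class field theory providing a unit of $\calO$ that is not a norm from $K=k(\sqrt{a})$, and the evenness of the residue characteristic forcing such a non-norm to occur among principal units---but it executes the second pillar by a genuinely different mechanism. The paper is constructive: starting from a unit non-norm $b$, it uses bijectivity of squaring on $\F_q^\times$ to write $b\equiv d^2 \bmod \pi$, and multiplicativity of the symbol then gives $(a,b)=(a,1-\pi u)$ with $u=e/d^2$, so an explicit $u$ is extracted from $b$; implicitly this is the decomposition of any unit as (square)$\times$(principal unit). You instead run a structural index count: the Eisenstein argument shows a uniformizer of $k$ is a norm, so $[\calO^\times:\calO^\times\cap\N_{K/k}(K^\times)]=2$, and the Teichm\"uller decomposition $\calO^\times=\mu_{q-1}\times U_1$ together with the odd order of $\mu_{q-1}$ forces $\mu_{q-1}$ into the norm group, whence $[U_1:U_1\cap\N_{K/k}(K^\times)]=2$ and $U_1\not\subseteq\N_{K/k}(K^\times)$. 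Your route is non-constructive but cleanly isolates where $\Char\F_q=2$ enters (only through $q-1$ being odd), and it also supplies the detail the paper delegates to ``local class field theory,'' namely why a unit non-norm exists for a ramified quadratic extension; the paper's route is shorter and hands over the element $1-\pi u$ explicitly. Since the later application (Lemma \ref{lem:normdist}) only needs existence, either version suffices, and your closing observation that the statement genuinely fails in odd residue characteristic (where $U_1$ consists of norms) matches the remark following Lemma \ref{lem:normdist}.
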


\begin{proof}
By local class field theory, there exists some $b\in \calO^\times$ such that $b\notin \N_{k(\sqrt{a})/k}k(\sqrt{a})^\times$ and so $\inv (a,b)=1/2$. Since the squaring map $x\mapsto x^2$ is an isomorphism on $\F_q^{\times}$, there exists $d\in \calO^\times$ such that $b-d^2\equiv 0\bmod \pi$. Write $b-d^2=-\pi e$ for some $e\in\calO$. Then,
$$(a,b)=(a,d^2+b-d^2)=(a,d^2-\pi e)=(a,1-\pi(e/d^2)).$$
Setting $u=e/d^2\in\calO$ gives our result.
\end{proof}

\section{Generators for the Brauer group}\label{sec:Brauergroup}

Throughout the rest of this paper, $X$ will denote a Ch\^atelet surface over a field $k$ of characteristic $\neq2$ with affine model given by the equation
$$y^2-az^2=P(x)$$
where $a\in k^\times\setminus k^{\times 2}$ and $P(x)$ is a separable polynomial of degree $3$ or $4$. The morphism $X\to \P^1$ given by the $x$-coordinate gives $X$ the structure of a conic bundle. The singular fibers are precisely the roots of $P(x)$ together with $\infty\in\P^1$ if $\textup{deg}(P(x))=3$.

Let $\Br X=\H^2_{\et}(X,\G_m)$ denote the cohomological Brauer group of $X$. Let $\Br_0 X$ be the image of the natural map $\Br k\to \Br X$. The Brauer group of Ch\^{a}telet surfaces, or more generally conic bundles, have been extensively studied, see \cite{CTS21}*{\S11.3} for a detailed exposition. We summarize the necessary results for our purposes here.

\begin{prop}\label{Brauergroupgenerators}
The Brauer group of $X$ depends on the factorization of $P(x)$ over $k$. Then $\Br(X)/\Br_0(X)$ is isomorphic to
\begin{enumerate}
    \item $(\Z/2\Z)^2$ if $P(x)$ splits completely into linear factors,
    \item $\Z/2\Z$ if $P(x)$ has an irreducible quadratic factor and $\sqrt{a}\notin k(\alpha)$ for all roots $\alpha$ of $P(x)$,
    \item $\{0\}$ otherwise.
\end{enumerate}
\end{prop}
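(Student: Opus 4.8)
The plan is to compute $\Br(X)/\Br_0(X)$ via the standard exact sequence for conic bundles over $\mathbb{P}^1$, using the Hochschild--Serre spectral sequence together with the description of the generic fiber as a conic. First I would recall that for the conic bundle $\pi\colon X\to\mathbb{P}^1$, the Brauer group of the generic fiber $X_\eta$ over $k(t)$ is generated by the class of the quaternion algebra $(a,P(t))$ (the class of the conic), and that by Tsen's theorem applied after base change to $\bar k(t)$ one has $\Br(X_{\bar k})=0$, so $\Br(X)/\Br_0(X)$ injects into $H^1(k,\Pic\Xbar)$; in fact, since $X$ has a $k$-point (in the relevant cases, or one passes to $k(t)$ where the conic has a rational point generically), $\Br(X)/\Br_0(X)\cong H^1(k,\Pic\Xbar)$. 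So the computation reduces to a Galois-cohomological computation of $H^1(k,\Pic\Xbar)$, where $\Pic\Xbar$ is the geometric Picard group of the Châtelet surface.

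Next I would write down $\Pic\Xbar$ explicitly as a Galois lattice. The surface $\Xbar$ is a del Pezzo-like conic bundle with geometric Picard rank depending on the number of singular fibers; each of the (up to) four or three+$\infty$ singular fibers contributes a pair of $(-1)$-curves, and the Galois action permutes these curves according to (i) how the roots of $P(x)$ are permuted and (ii) whether $\sqrt a$ lies in the residue field of a given root — the two components of a singular fiber over a root $\alpha$ are defined over $k(\alpha,\sqrt a)$ and are swapped by the nontrivial element of $\Gal(k(\alpha,\sqrt a)/k(\alpha))$ when $\sqrt a\notin k(\alpha)$. With this module in hand, the three cases are distinguished purely by combinatorics of the Galois action: (1) $P$ split into linear factors means all roots are rational, and the only nontrivial Galois action is the swap coming from $\sqrt a\notin k$ on each fiber's components, giving $H^1\cong(\mathbb Z/2)^2$; (2) one irreducible quadratic factor with $\sqrt a$ in no residue field gives a single surviving $\mathbb Z/2$; (3) in all remaining configurations the permutation action is rich enough that $H^1$ vanishes.

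I would organize the $H^1$ computation uniformly: after removing the rank-2 "trivial" part (generated by a fiber class and a section, or by the pullback of $\Pic\mathbb{P}^1$ and one curve), the relevant piece of $\Pic\Xbar$ is a permutation-type module built from the singular fibers, and one computes $H^1$ of a sum of induced modules plus relations via Shapiro's lemma. Alternatively — and this is probably the cleanest route — I would invoke the formula of Colliot-Thélène--Sansuc for conic bundle surfaces expressing $\Br(X)/\Br_0(X)$ in terms of which products of the quaternion classes $(a,x-\alpha_i)$ (over subextensions) come from $\Br(k)$; the cited reference \cite{CTS21}*{\S11.3} already contains exactly this, so the proof can be presented as: set up the conic bundle, quote the general formula, and then check the three factorization cases by hand. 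The main obstacle is the bookkeeping in case (3): there are several subcases (irreducible cubic, product of a linear and an irreducible cubic, two irreducible quadratics with $\sqrt a$ in a residue field, irreducible quartic) and in each one must verify that every candidate Brauer class is either ramified somewhere on $\mathbb{P}^1$ in an inconsistent way or is a norm and hence trivial in the quotient — i.e., that the residue/exactness conditions force the class into $\Br_0 X$. I would handle this by checking residues of $(a, \prod_{i\in S}(x-\alpha_i))$ at each singular fiber and at $\infty$, showing the only Galois-invariant combinations with consistent residues are those listed in (1) and (2).
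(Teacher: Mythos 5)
Your proposal is correct and takes essentially the same route as the paper, which in fact offers no proof of this proposition at all: it is stated as a summary of known results with a pointer to \cite{CTS21}*{\S11.3}, exactly the ``quote the conic-bundle formula'' option you identify as the cleanest. Your sketch of the underlying computation (vertical classes $(a,\prod_{i\in S}p_i(x))$ with residue bookkeeping, equivalently $\HH^1(k,\Pic\Xbar)$) is the standard argument behind that reference and is sound, modulo the minor care needed to pass from $\Br_1 X/\Br_0 X\hookrightarrow \HH^1(k,\Pic\Xbar)$ to an isomorphism when one does not assume a $k$-point.
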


The points on $\P^1_k$ corresponding to the singular fibers determine the generators for the quotient $\Br X/\Br_0 X$. In particular, if $P(x)$ splits completely, we can map three of the roots of $P(x)$ to $0,1,$ and $\infty \in \P^1$ via an automorphism of $\P^1$. In doing so, we may assume that $P(x)=cx(x-1)(x-\lambda)$ for some $\lambda \in k \smallsetminus \{0,1\}$ and $c\in k$.

The main proofs of this paper work with fixed generators for the Brauer groups, and for the cases above where $\Br X/\Br_0 X\neq 0$, our generating algebras are as follows:
\begin{enumerate}
\item If $P(x)$ splits completely, then $\Br X/\Br_0 X$ is generated by quaternion algebras of the form $$\mathcal{A}=(a,x(x-1)) \;\;\; \text{and} \;\;\; \mathcal{B}=(a, x(x-\lambda)).$$
\item If $P(x)=cf(x)g(x)$ where both $f$ and $g$ are monic and at least one of $f(x)$ or $g(x)$ of degree $2$, then $\Br X/\Br_0 X$ is generated by the quaternion algebra $$\mathcal{C}=(a,f(x))=(a,cg(x)).$$ 

\end{enumerate}

We now prove Theorem \ref{thm:BrauerVanishIntro}(2)(b)(c) from the introduction. It suffices to show that the Brauer group consists of only constant algebras, in which case there is no Brauer--Manin obstruction to weak approximation.

\begin{thm}\label{thm:BrauerVanish}
Let $X$ be a Ch\^{a}telet surface over $k$ and $L/k$ the splitting field of $P(x)$. Then $\Br X_K/\Br_0 X_K=0$ for every finite extension $K/k$ if and only if one of the following holds
\begin{enumerate}
    \item $\Gal(L/k)\isom D_8,(\Z/2\Z)^2$ and $P(x)$ factors further over $k(\sqrt{a})/k$.
    \item $\Gal(L/k)\isom D_6,\Z/4\Z,\Z/2\Z$ and $\sqrt{a}\in L$.
\end{enumerate}
\end{thm}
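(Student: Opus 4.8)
The plan is to analyze how the Brauer quotient $\Br X_K/\Br_0 X_K$, computed via Proposition~\ref{Brauergroupgenerators}, behaves as $K$ ranges over all finite extensions of $k$, and to show that it vanishes identically precisely in the two listed cases. The key observation is that Proposition~\ref{Brauergroupgenerators} reduces everything to two pieces of combinatorial data over $K$: the factorization type of $P(x)$ over $K$, and whether $\sqrt{a}$ lies in the residue field $K(\alpha)$ of some factor of $P(x)$. The quotient is nonzero iff either $P(x)$ splits completely over $K$, or $P(x)$ has an irreducible quadratic factor over $K$ with $\sqrt a\notin K(\alpha)$ for every root $\alpha$. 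So the theorem becomes: no finite extension $K/k$ realizes either of these configurations iff (1) or (2) holds. Since $P(x)$ splits completely over $L$, the most dangerous test extension is $K=L$ itself, where $\Br X_L/\Br_0 X_L=(\Z/2\Z)^2$ unless $\sqrt a\in L$ (in which case the generators $\mathcal{A},\mathcal{B}$ become split because $(a,-)$ is trivial). Thus a necessary condition for perpetual vanishing is $\sqrt a\in L$; I would establish that first, ruling out all Galois groups for which $\sqrt a\notin L$ is forced or the relevant factorization refinement fails.

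Next, assuming $\sqrt a\in L$, I would go through the possible Galois groups of a separable quartic (or the degree-$3$ case): the transitive subgroups of $S_4$ are $S_4, A_4, D_8, V_4=(\Z/2\Z)^2, \Z/4\Z$, and for a reducible $P(x)$ one also sees $S_3=D_6, \Z/3\Z, \Z/2\Z, \{1\}$ acting on the roots. For each, I would enumerate the subextensions $K/k$ inside (the Galois closure of) $L$ and determine the factorization type of $P$ over $K$ and whether $\sqrt a$ lands in a residue field. The crux is: with $\sqrt a\in L$ already, when can we still find some $K$ over which $P$ has an irreducible quadratic factor whose residue field misses $\sqrt a$? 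This is exactly where the extra hypotheses in (1) and (2) enter. In case (2), $\Gal(L/k)\in\{D_6,\Z/4\Z,\Z/2\Z\}$: here the lattice of subfields is small enough that once $\sqrt a\in L$, every $K$ over which $P$ has a quadratic factor already contains $\sqrt a$ in that factor's residue field, or $P$ splits further — this needs a case check but is short. In case (1), $\Gal(L/k)\in\{D_8,V_4\}$ and we additionally assume $P$ factors over $k(\sqrt a)$; the point is that $k(\sqrt a)$ is then a subfield over which $P$ becomes a product of two quadratics (or splits further), and $\sqrt a$ lies in the base, so all the residue fields contain $\sqrt a$, and passing to any larger $K$ only splits $P$ further — so the quadratic-factor-with-bad-residue-field scenario never arises. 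I should double-check that in the $D_8$ and $V_4$ cases without the factorization hypothesis one genuinely produces a bad $K$ (so the "only if" direction holds): e.g. take $K$ to be a degree-$2$ subfield over which $P$ has an irreducible quadratic factor but which does not contain $\sqrt a$; existence of such $K$ follows from the subgroup structure.

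For the converse ("only if"), I would argue contrapositively: if neither (1) nor (2) holds, exhibit a finite $K/k$ with $\Br X_K/\Br_0 X_K\neq 0$. If $\sqrt a\notin L$, take $K=L$ and get $(\Z/2\Z)^2$. If $\sqrt a\in L$ but the Galois group / factorization hypotheses fail — so $\Gal(L/k)$ is among $S_4, A_4, D_8, V_4$ with $P$ not factoring over $k(\sqrt a)$ in the $D_8,V_4$ cases, or it is $D_6,\Z/4\Z,\Z/2\Z$ but $\sqrt a\notin L$ (already handled) — one locates an intermediate field $K$ over which $P$ acquires an irreducible quadratic factor with residue field not containing $\sqrt a$, invoking Proposition~\ref{Brauergroupgenerators}(2). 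Concretely one can often take $K$ to be a suitable index-$2$ or index-$3$ subgroup's fixed field. I expect the main obstacle to be the bookkeeping in the converse for the $D_8$ and $A_4$/$S_4$ cases: one must verify that \emph{some} subextension gives a quadratic factor whose residue field avoids $\sqrt a$, which requires understanding how the quadratic subfields of $L$ generated by pairs of roots interact with $k(\sqrt a)$; I would handle this by a direct inspection of the subgroup lattice of $S_4$ and its action on the $3$ "resolvent" partitions of the four roots. The finitely many Galois-group cases make the whole argument a finite, if somewhat lengthy, verification rather than requiring a new idea.
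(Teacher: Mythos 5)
Your plan is the paper's: everything is reduced to the data of Proposition \ref{Brauergroupgenerators}, the necessity of $\sqrt a\in L$ comes from testing $K=L$, and both implications become a finite check over the possible groups $\Gal(L/k)$ and their subgroups; your converse sketch (a Klein-four fixed field for $S_4$, a second quadratic subfield for $D_8$ and $(\Z/2\Z)^2$, and $A_4$, $\Z/3\Z$, $\{1\}$ excluded because $\sqrt a\notin L$) is exactly what the paper does. The gap is in the ``if'' direction in case (1), where the justification you give does not do the required work. You argue that $k(\sqrt a)$ is a field over which $P$ becomes a product of two quadratics with $\sqrt a$ in the base, ``and passing to any larger $K$ only splits $P$ further.'' But extensions $K\supseteq k(\sqrt a)$ are harmless for trivial reasons ($\sqrt a\in K$ already kills every symbol $(a,\cdot)$); the extensions that actually need an argument are those with $\sqrt a\notin K$ over which $P$ becomes a product of two irreducible quadratics, and these exist: for $\Gal(L/k)\isom D_8$ take $K$ the fixed field of a double transposition not fixing $\sqrt a$, and for $(\Z/2\Z)^2$ with $P$ reducible take $K=k$ itself. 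For such $K$ one must use the hypothesis that $P$ factors over $k(\sqrt a)$: over $K(\sqrt a)$ the polynomial acquires a linear factor, hence one of the two quadratic factors over $K$ has residue field exactly $K(\sqrt a)$, and Proposition \ref{Brauergroupgenerators} then gives vanishing because it demands $\sqrt a\notin K(\alpha)$ for \emph{all} roots $\alpha$. This step is the actual content of the forward direction in the paper's proof and is missing from your sketch.

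Two smaller points. First, your ``crux'' question asks for ``an irreducible quadratic factor whose residue field misses $\sqrt a$''; that is the wrong criterion and would misclassify, e.g., the $(\Z/2\Z)^2$ case with $P=q_1q_2$ over $k$ and $k(\sqrt a)$ the splitting field of $q_1$: the residue field of $q_2$ misses $\sqrt a$, yet $\Br X/\Br_0 X=0$. The correct condition, which you do state correctly at the start, is that \emph{every} root has residue field missing $\sqrt a$. Second, you enumerate only subextensions $K\subseteq L$, whereas the theorem quantifies over all finite $K/k$; you should record the (easy) reduction that, once $\sqrt a\in L$, the quotient $\Br X_K/\Br_0 X_K$ depends only on $K\cap L$, since the factorization of $P$ over $K$ and the membership of $\sqrt a$ in $K$ and in the residue fields are governed by $\Gal(LK/K)\isom\Gal(L/L\cap K)$.
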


\begin{proof}
Assume that either condition (1) or (2) listed above holds. Let $K/k$ be a finite extension. By Proposition \ref{Brauergroupgenerators}, we only need to consider the case when $P(x)$ splits into quadratic or smaller factors over $K$. Since $X_K$ is rational if $\sqrt{a}\in K$, we also assume $\sqrt{a}\notin K$. In particular this means $L\not\subseteq K$. We break into cases based on its factorization.

\begin{enumerate}
    \item (Product of two quadratics) In this case $\Gal(L/K)$ is either $\Z/2\Z$ or $(\Z/2\Z)^2$. In the former case, $L=K(\sqrt{a})$ and $\sqrt{a}$ is in the residue field of one of the quadratic factors, so $\Br X_K/\Br_0 X_K=0$ by Proposition \ref{Brauergroupgenerators}. In the latter case, we must have $\Gal(L/k)$ either $D_8$ or $(\Z/2\Z)^2$. By assumption, over $K(\sqrt{a})$, $P(x)$ must contain a linear factor. This implies that over $K$, one of the quadratic factors on $P(x)$ has residue field $K(\sqrt{a})$. 
    
    \item (Product of two linear and one quadratic) In this case $L=K(\sqrt{a})$ and so the quadratic factor of $P(x)$ has residue field containing $\sqrt{a}$.
\end{enumerate}

Conversely assume $\Br X_K/\Br_0 X_K=0$ for every finite extension $K/k$. If $\sqrt{a}\notin L$ then $\Br X_L/\Br_0 X_L\isom(\Z/2\Z)^2$ by Proposition \ref{Brauergroupgenerators}, so we may assume that $\sqrt{a}\in L$. We eliminate some possibilities by dividing into cases based on the isomorphism class of $\Gal(L/k)$.
\begin{itemize}
\item[$(S_4)$] There is a unique quadratic extension contained in $L/k$ corresponding to $A_4\subset S_4$ which must be $k(\sqrt{a})$. Let $K/k$ be an extension such that $\Gal(L/K)\isom(\Z/2\Z)^2$. Then $P(x)$ factors over $K$ as a product of two quadratics. Since $\Gal(L/K(\sqrt{a})$ must move both roots, it follows the residue fields of either factors does not contain $\sqrt{a}$. Hence $\Br X_K/\Br_0 X_K\isom\Z/2\Z$.

\item[$(A_4)$] There are no index $2$ subgroups in this case.

\item[$(D_8)$] If $P(x)$ remains irreducible over $k(\sqrt{a})$, then let $K/k$ be any distinct quadratic extension so that $P(x)$ factors into two quadratics over $K$. It follows neither of the residue fields contain $\sqrt{a}$ by the initial assumption so that $\Br X_K/\Br_0 X_K\isom\Z/2\Z$.

\item[$(K_4)$] If $P(x)$ is irreducible over $k$ then it must factor over $k(\sqrt{a})$. Otherwise, assume $P(x)$ is a product of two quadratics and remains so over $k(\sqrt{a})$. Then the same conclusion as the case above applies.
\end{itemize}

\noindent This leaves us with either (1) or (2) highlighted in the theorem.\end{proof}

\begin{cor}
Let $P(x)$ satisfy either conditions (1) or (2) of Theorem \ref{thm:BrauerVanish}. Then the Ch\^{a}telet surface
$$y^2-az^2=P(x)$$
satisfies weak approximation over all finite extensions $K/k$.
\end{cor}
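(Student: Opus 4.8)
The plan is to deduce this directly from Theorem \ref{thm:BrauerVanish} together with the theorem of Colliot-Th\'el\`ene, Sansuc, and Swinnerton-Dyer that, for a Ch\^atelet surface over a number field, the Brauer--Manin obstruction is the only obstruction to weak approximation \cites{CTSSD87,CTSSD87b}. So the proof is a short two-step argument rather than anything computational.

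First I would fix an arbitrary finite extension $K/k$. Since $K$ is again a number field and $X_K$ is again a Ch\^atelet surface --- it is cut out by the same equation $y^2-az^2=P(x)$, now regarded over $K$ --- the cited result of Colliot-Th\'el\`ene, Sansuc, and Swinnerton-Dyer applies verbatim to $X_K$, giving that $X(K)$ is dense in the Brauer--Manin set $X(\A_K)^{\Br}$. Next I would recall that $X(\A_K)^{\Br}$ depends only on the quotient $\Br X_K/\Br_0 X_K$: every element of $\Br_0 X_K=\im(\Br K\to\Br X_K)$ pairs to $0$ with any adelic point by the reciprocity law of global class field theory, so constant classes cut out nothing. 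By hypothesis $P(x)$ satisfies condition (1) or (2) of Theorem \ref{thm:BrauerVanish}, and that theorem then gives $\Br X_K/\Br_0 X_K=0$, whence $X(\A_K)^{\Br}=X(\A_K)$.

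Combining these two facts, $X(K)$ is dense in $X(\A_K)$, i.e.\ $X_K$ satisfies weak approximation. Since $K/k$ was an arbitrary finite extension, the Ch\^atelet surface satisfies weak approximation over all finite extensions of $k$, as claimed. (If $X(\A_K)=\emptyset$ the assertion is vacuous; otherwise the same theorem of Colliot-Th\'el\`ene, Sansuc, and Swinnerton-Dyer, applied to the Hasse principle, yields $X(K)\neq\emptyset$, so the statement is non-trivial exactly when there are adelic points.)

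I do not expect a genuine obstacle here: all the arithmetic content is already packaged in Theorem \ref{thm:BrauerVanish} and in the fibration-method results of Colliot-Th\'el\`ene, Sansuc, and Swinnerton-Dyer for conic bundles over $\P^1$. The only point requiring a word of care is the base change, namely checking that $X_K$ still falls within the scope of the cited theorem --- which it does, being literally a Ch\^atelet surface over the number field $K$ --- and that Theorem \ref{thm:BrauerVanish} is being invoked for the \emph{same} extension $K$ over which weak approximation is being asserted.
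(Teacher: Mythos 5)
Your argument is correct and is essentially the paper's own proof: invoke Theorem \ref{thm:BrauerVanish} to see that $\Br X_K/\Br_0 X_K=0$ (hence no Brauer--Manin obstruction) for every finite extension $K/k$, then apply the result of Colliot-Th\'el\`ene, Sansuc, and Swinnerton-Dyer \cite{CTSSD87}*{Theorem B} that this obstruction is the only one to weak approximation for Ch\^atelet surfaces over a number field. The extra remarks about base change and the empty-adelic-points case are fine but not needed.
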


\begin{proof}
By Theorem \ref{thm:BrauerVanish}, there is no Brauer--Manin obstruction to weak approximation over all such extensions $K/k$. So by \cite{CTSSD87}*{Theorem B(ii)(a)} they satisfy weak approximation.
\end{proof}

\section{Failure of weak approximation when \texorpdfstring{$P(x)$}{} is split}\label{sec:split}

Let $k$ be a number field. In this section, we consider the case when $P(x)$ factors into linear factors. Our goal is to prove Theorem \ref{thm:SurjLinear}. As discussed in \S\ref{sec:Brauergroup}, after moving one of the singular fibers to $\infty\in\P^1$, we may assume
\begin{equation}\label{eqn:linearfactors}
P(x)=cx(x-1)(x-\lambda)
\end{equation}
for some $\lambda\in k$ not equal to $0,1$ and $c\in k^\times$.

Let $v$ be a finite place of $k$ where $a\notin k_v^{\times2}$, and $\pi\in \calO_k$ be a uniformizer. After a change of coordinates, we may assume that $v(c)=0,1$. By considering the six cross-ratios on $\P^1$, we may also assume that $v(\lambda)\geq 0$. This allows us to define a model over $\calO_v$ using the same equation. We say that $X$ has bad reduction at $v$ if the special fiber of this model is singular. Concretely, this means that at least one of $v(c),v(\lambda),v(\lambda-1)$ is nonzero or that $k(\sqrt{a})/k$ is ramified at $v$.

Suppose the Ch\^{a}telet surface $X$ has bad reduction at $v$. We analyze the surjectivity of the evaluation map $\ev_{\calA}\colon X(k_v)\to \Br k_v[2]$ for the algebra $\calA\in\Br(X)$ listed in \S\ref{sec:Brauergroup}. We first consider the unramified case, and then deal with the ramified case separately for odd and even primes.

\subsection{Unramified Case} Assume that $k_v(\sqrt{a})/k_v$ is unramified. For $x\in k_v$, there exists a point in $X(k_v)$ lying over the corresponding point in $\P^1$ if $v(P(x))$ is even. By Lemma \ref{lem:HilbUnram}, for any point $Q=(x,y,z)\in X(k_v)$,
$$\inv_v(\calA(Q))=\inv_v(a,x(x-1))=v(x(x-1))/2\in \Q/\Z.$$
The map $\ev_\calA\colon X(k_v)\to \Br k_v[2]\isom\Z/2\Z$ is surjective if and only if it is nonconstant, and for $\calA(Q)$ to be nontrivial it is equivalent to $v(x(x-1))$ being odd.

\begin{prop}\label{prop:linearunramA}
If $k_v(\sqrt{a})/k_v$ is unramified, then the evaluation map $\ev_\calA\colon X(k_v)\to \Br k_v[2]$ is surjective.
\end{prop}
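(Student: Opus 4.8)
The plan is to exhibit two $k_v$-points of $X$ on which $\calA$ takes the two distinct classes in $\Br k_v[2]\isom\Z/2\Z$; since surjectivity of $\ev_\calA$ is equivalent to nonconstancy, this suffices. By the discussion preceding the statement, for $x\in\A^1(k_v)$ there is a point $Q\in X(k_v)$ over $x$ whenever $P(x)$ lies in the norm group of $k_v(\sqrt a)/k_v$, which, the extension being unramified, means simply that $v(P(x))$ is even; and then $\inv_v(\calA(Q))=v(x(x-1))/2$. So I must produce $x_0,x_1\in k_v\setminus\{0,1,\lambda\}$, each with $v(P(x_i))$ even, such that $v(x_0(x_0-1))$ is even and $v(x_1(x_1-1))$ is odd.

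The choice of $x_0$ is immediate: take $x_0$ of negative valuation $N$, with $N=-2$ if $v(c)=0$ and $N=-1$ if $v(c)=1$ (recall we have normalized so that $v(c)\in\{0,1\}$ and $v(\lambda)\geq0$). Then $v(x_0-1)=v(x_0-\lambda)=N$, so $v(x_0(x_0-1))=2N$ and $v(P(x_0))=v(c)+3N$ are both even, and $\inv_v(\calA(Q_0))=N\equiv0$.

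Finding $x_1$ with $v(x_1(x_1-1))$ odd is the heart of the matter. Since $v(x)$ and $v(x-1)$ are never simultaneously positive, and $v(x(x-1))=2v(x)$ is even whenever $v(x)<0$, an odd value of $v(x(x-1))$ forces $x_1\equiv0$ or $x_1\equiv1\pmod{\pi}$. I therefore look for $x_1$ of the form $\pi^N$ or $1+\pi^N$ with $N$ odd, for which $v(x_1(x_1-1))=N$ and $v(P(x_1))=v(c)+N+v(\pi^N-\mu)$, where $\mu=\lambda$ in the first case and $\mu=\lambda-1$ in the second; the requirement is then that $v(\pi^N-\mu)$ have the parity of $v(c)+1$. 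When $v(c)=1$, one checks that $x_1=\pi$ works if $v(\lambda)=0$ and $x_1=1+\pi$ works if $v(\lambda)\geq1$ (in each case $v(\pi^N-\mu)=0$, so $v(P(x_1))=2$). When $v(c)=0$, the bad-reduction hypothesis together with $k_v(\sqrt a)/k_v$ being unramified forces $v(\lambda)\geq1$ or $v(\lambda-1)\geq1$, i.e.\ $\lambda\equiv0$ or $\lambda\equiv1\pmod\pi$. These two subcases are interchanged by the automorphism $x\mapsto1-x$ of $X$, which fixes $\calA=(a,x(x-1))$, so it is enough to treat $\lambda\equiv0$, say $m:=v(\lambda)\geq1$; here a direct computation with $x_1=\pi^N$ gives $v(\pi^N-\lambda)=\min(N,m)$ for $N\neq m$, so $N=1$ works when $m$ is even and $N=m+2$ works when $m$ is odd.

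The only real obstacle is this last bit of bookkeeping: when $v(\lambda)$ is odd the naive choice $x_1=\pi$ gives $v(P(\pi))$ odd, so no $k_v$-point lies over it, and one must instead push $x_1$ beyond the double root of $P\bmod\pi$ by taking $N=v(\lambda)+2$; the care needed is to compute $v(x_1-\lambda)$ exactly rather than up to an inequality, which one does by factoring out the relevant power of $\pi$ and noting the cofactor is a unit. Once $x_0$ and $x_1$ are produced, $\ev_\calA$ is nonconstant and therefore surjective onto $\Br k_v[2]$.
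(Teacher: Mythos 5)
Your proposal is correct and follows essentially the same route as the paper: both reduce surjectivity to nonconstancy, use the unramified norm criterion ($v(P(x))$ even) together with $\inv_v\calA = v(x(x-1))/2$, and then exhibit two explicit $x$-values (one of negative valuation giving invariant $0$, one congruent to $0$ or $1$ mod $\pi$ giving invariant $1/2$) via a case analysis on $v(c)$ and $v(\lambda)$. Your treatment of the subcase $v(c)=0$ with $v(\lambda)$ odd (taking $N=v(\lambda)+2$) just makes explicit the choice the paper leaves implicit when it asks for $x$ with $v(x)\equiv v(x-\lambda)\equiv 1 \bmod 2$.
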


\begin{proof}
The assumptions imply we must have $v(c)\neq0$ so that $v(c)=1$ by our initial setup. Observe that for $x\in k_v$, there exists $Q\in X(k_v)$ lying over $x$ if and only if $v(x(x-1)(x-\lambda))$ is even. So it suffices to prove there exists $x_1,x_2\in k_v$ such that $P(x_1),P(x_2)$ are even and $x_1(x_1-1)\not\equiv x_2(x_2-1)\bmod 2$.

First assume that $v(\lambda)=v(\lambda-1)=0$. Take $x=1/\pi$, then $v(cP(x))=-2$ and $v((1/\pi)(1/\pi-1))=-2$ is even. On the other hand, if we take $x=\pi$, then $v(cP(x))=2$ and $v((\pi)(\pi-1))=1$ is odd.

To finish, it suffices to consider when $v(\lambda)>0$ (If $v(\lambda-1)>0$, then we may reduce to this case after a change of coordinates). We divide further into two cases, namely $v(c)=0$ and $v(c)=1$. Recall that $$v(P(x))=v(c)+v(x)+v(x-1)+v(x-\lambda).$$

If $v(c)=0$ then first set $x=\frac{1}{\pi}$ so that $v(P(x))=-6$ and $v(x(x-1))=-4$. Next, pick $x$ such that $0<v(x)$ and $v(x)\equiv v(x-\lambda)\equiv  1 \bmod 2$. Then $v(P(x))$ is even and $v(x(x-1))=v(x)$ is odd.

If $v(c)=1$, then first set $x=\frac{1}{\pi}$ so that $v(P(x))=-2$ and $v(x(x-1))=-2$. Furthermore, if we set $x=1+\pi$, then $v(P(x))=2$ and $v(x(x-1))=1$ as desired.
\end{proof}

\subsection{Ramified case odd}

If $v$ is an odd place and $k_v(\sqrt{a})/k_v$ is ramified, then we may assume after a change of coordinates that $v(a)=1$.

\begin{prop}\label{prop:linearoddram}
Assume that $v(a)=1$ and $P(x)$ has the form \eqref{eqn:linearfactors}. Then the evaluation map $\ev_\alpha\colon X(k_v)\to \Br k_v[2]$ is surjective for some $\alpha\in\{\calA,\calB\}$.
\end{prop}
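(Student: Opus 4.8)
The plan is to show the evaluation map is nonconstant by exhibiting two local points $Q_1, Q_2 \in X(k_v)$ at which $\inv_v$ of one of the two generating quaternion algebras differs. Since $v$ is odd, $a$ is a uniformizer up to squares, so $k_v(\sqrt{a})/k_v$ is ramified and $\Br k_v[2] \isom \Z/2\Z$. For a point $Q = (x,y,z) \in X(k_v)$, I expect to compute $\inv_v(a, g(x))$ for $g \in \{x(x-1), x(x-\lambda)\}$ via the tame symbol: since $v(a)=1$, the class $(a, g(x))$ is trivial precisely when $g(x)$ (more precisely, its leading $\pi$-adic unit, reduced mod $v$) is a square in $\F_q^\times$, and when $v(g(x))$ is even, so that $(a, g(x)) = (\pi, u)$ amounts to the Legendre symbol of $u \bmod v$ combined with the parity of $v(g(x))$. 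The first task is to record this formula cleanly, presumably as a short lemma or inline computation building on Lemma \ref{lem:HilbUnram} applied to the \emph{other} slot (i.e. swapping $a$ and $g(x)$, using that $k_v(\sqrt{g(x)})/k_v$ is unramified when $v(g(x))$ is even).

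Next I would produce the points. A point $Q$ lies over $x \in \P^1(k_v)$ exactly when $P(x) = c\,x(x-1)(x-\lambda)$ is represented by the form $y^2 - az^2$ over $k_v$, which (as $v(a)=1$) happens iff $v(P(x))$ is even and the leading unit of $P(x)$ is a norm from $k_v(\sqrt{a})$, i.e. a unit square mod $v$ — or $x = \infty$, where one always has a point. So the strategy is: choose $x_1, x_2$ with $v(P(x_i))$ even and leading units square, arranged so that $\calA$ (or $\calB$) takes different values. The natural candidates are $x = \infty$ (a smooth point, where $\calA$ evaluates to a fixed class, to be computed from the model at infinity) and a carefully chosen finite $x$; alternatively two finite values such as $x = \pi$, $x = 1 + \pi u$, $x = \lambda + \pi u$, or units $x \equiv x_0 \bmod \pi$ with $x_0(x_0-1)$ chosen square or nonsquare mod $v$. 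One likely subtlety: whether $\lambda$ is a unit or $v(\lambda) > 0$ changes which $x$ work, so I expect a short case split on $v(\lambda) \in \{0, >0\}$ and, within $v(\lambda)=0$, on whether $\lambda$, $\lambda-1$, etc., are squares mod $v$; in some sub-cases one must switch from $\calA$ to $\calB$, which is exactly why the statement only claims surjectivity for \emph{some} $\alpha \in \{\calA, \calB\}$.

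The main obstacle will be bookkeeping the two independent mod-$v$ obstructions simultaneously: for a finite $x$ to yield a point at all, the leading unit of $P(x)$ must be a square mod $v$, and then the value of $\calA(Q)$ is governed by the leading unit of $x(x-1)$ and the parity of $v(x(x-1))$ — and these two conditions are not independent once $v(c), v(\lambda)$ are fixed. I expect the cleanest route is to exploit that one can freely adjust $x$ by a unit square (which doesn't change any Legendre symbol) and by the additive structure near $0$, $1$, $\lambda$, and $\infty$ to hit the four reduction types; concretely, near $x = \infty$ one has $v(P) \equiv v(c) \bmod 2$ and $\calA$ behaves like $(a, \pi^{-2}) = 0$, while near $x = 0$ with $v(x)$ odd one gets $v(x(x-1))$ odd — so the contrast between a point near $\infty$ and a point near $0$ (or near $1$) should already separate the values of one of $\calA, \calB$ in every case, once the squareness-of-leading-unit constraints are met by adjusting $x$ within its residue disk. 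I would organize the write-up as: (i) the invariant formula; (ii) the criterion for $x$ to lift to $X(k_v)$; (iii) a case analysis on $v(c) \in \{0,1\}$ and $v(\lambda) \in \{0, >0\}$ producing the required pair $Q_1, Q_2$ and naming which of $\calA, \calB$ works.
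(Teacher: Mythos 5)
Your general setup (the ramified symbol formula, the criterion for a fiber to have a $k_v$-point, and a case split on the reduction of $\lambda$) matches the paper's treatment of the degenerate cases, but the heart of the proposition is the case $\lambda\not\equiv 0,1\bmod \pi$, and there your plan has a genuine gap: the claim that contrasting a point near $\infty$ with a point near $0$ (or $1$, or $\lambda$) ``should already separate the values of one of $\calA,\calB$'' fails in general. Normalize $v(c)=0$ and suppose $c,\lambda,\lambda-1,-1$ are all squares in $\F_v$ (possible already for $k_v=\Q_{13}$, $a=13$, $c=1$, $\lambda=4$). Then for any $x$ reducing into one of the residue disks of $0,1,\lambda,\infty$, the norm condition required for the fiber over $x$ to contain a $k_v$-point forces the unit parts of $x(x-1)$ and of $x(x-\lambda)$ (after cancelling the appropriate power of $-a$) to be squares, so \emph{both} $\calA$ and $\calB$ evaluate trivially at every such point, whatever parity of $v(x)$ you arrange; adjusting $x$ by unit squares within those disks cannot change this. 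In that situation surjectivity is equivalent to the existence of a residue $\bar x\in\F_v\setminus\{0,1,\lambda\}$ with $\bar x(\bar x-1)(\bar x-\lambda)$ a nonzero square but $\bar x(\bar x-1)$ or $\bar x(\bar x-\lambda)$ a nonsquare, and that existence statement is precisely the nontrivial content of the proposition; it does not follow from the square-class bookkeeping in your outline.

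The paper supplies exactly this missing ingredient by restricting to the locus $z=0$, which is the elliptic curve $E:y^2=cx(x-1)(x-\lambda)$ with good reduction at $v$ and full rational $2$-torsion. Since $E(k_v)[2^\infty]$ is finite and contains $(\Z/2\Z)^2$, the set $E(k_v)\setminus 2E(k_v)$ is nonempty, and by the $2$-descent criterion a point $Q$ outside $2E(k_v)$ has exactly two of $cx$, $c(x-1)$, $c(x-\lambda)$ nonsquare; hence one of $x(x-1)$, $x(x-\lambda)$ is a nonsquare of even valuation, so not a norm from the ramified extension, giving $\inv_v\alpha(Q)=1/2$ for some $\alpha\in\{\calA,\calB\}$, while both classes vanish at the rational point over $\infty$. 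To repair your argument you would need either this descent step or a substitute existence argument over the residue field (e.g.\ a point count on the reduced curve); your handling of the degenerate residues $\lambda\equiv 0,1\bmod\pi$ is fine and essentially coincides with the paper's direct computation there.
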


\begin{proof}
Dividing the equation \eqref{eqn:Chatelet} for $X$ by a power of $-a$ and exchanging $y,z$ if necessary, we can also assume that $v(c)=0$. Let $R\in X(k)$ be the unique rational point over $\infty\in\P^1$. By using the following alternative formula for $\calA,\calB$ defined at $R$,
$$\calA=(a,w(1-w)),\calB=(a,w(1-\lambda w)),$$
where $w=1/x$, we see that $\ev_\calA(R)=\ev_\calB(R)=0$. To finish the proof, it suffices to exhibit a point where the evaluation is nontrivial.

Assume first that $\lambda\not\equiv0,1\bmod \pi$. Let $E\subset X$ be the closed subset given by $z=0$. Then $E$ is an elliptic curve given by
$$y^2=cx(x-1)(x-\lambda).$$
For $Q=(x,y)\in E(k_v)$, the quantities
$$cx,\ cx-c,\ cx-c\lambda$$
are all squares in $k_v$ if and only if $Q\in 2E(k_v)$ (use \cite{Hus04}*{\S1.4 Theorem 4.1} after suitable coordinate change).

\begin{Lemma}
$E(k_v)\setminus 2E(k_v)$ is nonempty
\end{Lemma}

\begin{proof}
Since $v\nmid 2$, $E$ has good reduction at $v$ and $E(k_v)[2^\infty]=\widetilde{E}(\F_v)[2^\infty]$, where $\widetilde{E}$ is the reduction modulo $v$. In particular, this means $E(k_v)[2^\infty]$ is finite and moreover nontrivial as $E(k_v)[2]\isom (\Z/2\Z)^2$. It follows then that $E(k_v)\setminus 2E(k_v)$ is nonempty.\end{proof}

\begin{remark}
Although we will not need it, the previous lemma is true for $v\mid 2$ as well. Then there is a subgroup $E^*\subset E(k_v)$ of finite index such that $\calO_v\isom E^*$. Take any $u\in \calO_v$ with $v(u)=0$. Then $u$ is not divisible by $2$ in $E^*$, and iteratively dividing by $2$ in $E(k_v)$ would produce infinitely many points in the quotient $E(k_v)/E^*$, a contradiction. Hence $u$ is not $2$-divisible in $E(k_v)$.
\end{remark}

Let $Q\in E(k_v)\setminus 2E(k_v)$ so then exactly two of $cx, cx-c, cx-c\lambda$ are not squares. This means in particular that $v(x)\geq0$, and so at most one of $v(x),v(x-1),v(x-\lambda)$ can be nonzero and they must all be even since $v(x(x-1)(x-\lambda))$ is even. We also obtain that at least one of $x(x-1),x(x-\lambda)$ is not a square. Hence, at least on of those products is not a square with even valuation which implies it is not a norm from $k_v(\sqrt{a})/k_v$. Hence, $\inv_v \alpha(Q)=1/2$ for some $\alpha\in\{\calA,\calB\}$.

Now assume that $\lambda\equiv 0\bmod \pi$. Choose $\overline{x}\in\F_v^\times\setminus \F_v^{\times2}$ such that $\overline{x}-1\in\F_v^{\times2}$. Lift $\overline{x}$ to $x\in\calO_v$. Then $(a,x)=(a,x-\lambda)\neq0$ but $(a,x-1)=0$. Hence, there exists some point $Q\in X(k_v)$ lying over $x\in\P^1$ where
$$\inv_v\calA(Q)=(a,x(x-1))=1/2.$$
The case $\lambda\equiv 1\bmod \pi$ is very similar.
\end{proof}

\subsection{Ramified case even}

\begin{prop}\label{prop:linearevenram}
Let $v$ be a place lying over $2$. Assume that $k_v(\sqrt{a})/k_v$ is ramified and $P(x)$ has the form \eqref{eqn:linearfactors}. Then the evaluation map $\ev_\calA\colon X(k_v)\to \Br k_v[2]$ is surjective.
\end{prop}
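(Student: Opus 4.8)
The plan is to prove surjectivity by showing that $\ev_\calA$ is non‑constant on $X(k_v)$. One value is automatic: for any $x\in k_v$ with $v(x)$ very negative and of the same parity as $v(c)$ one has $v(P(x))=v(c)+3v(x)$ even, so the conic fibre over $x$ has a $k_v$‑point, and there $\calA=(a,x(x-1))=(a,1-1/x)$ with $1-1/x\in 1+\pi^N\calO_v\subseteq N:=N_{k_v(\sqrt a)/k_v}k_v(\sqrt a)^\times$ because the local norm group is open; hence the evaluation is trivial there. So the whole task is to exhibit $x^\ast\in k_v$ with $v(P(x^\ast))$ even and $\inv_v(a,x^\ast(x^\ast-1))=1/2$.

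First I would strengthen Lemma~\ref{lem:nontrivialsymbol} to: \emph{there is a unit $u_0\in\calO_v^\times$ with $\inv_v(a,1-\pi u_0)=1/2$.} Indeed, if $1-\pi u$ were a norm for every unit $u$, then $1+\pi\calO_v^\times\subseteq N$; but the subgroup generated by $1+\pi\calO_v^\times$ already contains all of $1+\pi\calO_v$, since for $t\in\calO_v$ and any unit $u$ the element $u'=(-u+\pi t)/(1+\pi u)$ is again a unit and $(1+\pi u)(1+\pi u')=1+\pi^2 t$, which gives $1+\pi^2\calO_v$ and hence $U^{(1)}=1+\pi\calO_v$. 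Then $N\supseteq U^{(1)}$ would force $[\calO_v^\times:N\cap\calO_v^\times]$ to divide $[\calO_v^\times:U^{(1)}]=q-1$, an odd number, contradicting that this index equals $2$ for the ramified quadratic extension $k_v(\sqrt a)/k_v$. Now recall the normalizations $v(c)\in\{0,1\}$ and $v(\lambda),v(\lambda-1)\ge 0$. If $v(x^\ast)<0$ then $v(x^\ast-1)=v(x^\ast-\lambda)=v(x^\ast)$, so $\calA=(a,x^\ast(x^\ast-1))=(a,1-1/x^\ast)$ in $\Br k_v$ while $v(P(x^\ast))=v(c)+3v(x^\ast)$. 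Taking $x^\ast=1/(\pi u_0)$ gives $\inv_v(a,x^\ast(x^\ast-1))=\inv_v(a,1-\pi u_0)=1/2$ and $v(P(x^\ast))=v(c)-3$, which is even precisely when $v(c)=1$. This disposes of the case $v(c)=1$.

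The remaining case $v(c)=0$ is, I expect, the real work. Here $v(P(1/(\pi u_0)))=-3$ is odd, and worse, if the conductor of $k_v(\sqrt a)/k_v$ is $\pi^2$ then $1-1/x^\ast\in U^{(2)}\subseteq N$ for \emph{every} $x^\ast$ of even negative valuation, so no point over a negative $x$-value can work. The plan is to split on whether some unit $u$ has $1-\pi^2 u\notin N$: if so, take $x^\ast=1/(\pi^2 u)$, so that $v(P(x^\ast))=-6$ is even and $\inv_v(a,x^\ast(x^\ast-1))=\inv_v(a,1-\pi^2 u)=1/2$. In the complementary case — which the index count $[\calO_v^\times:N\cap\calO_v^\times]=2$ confines to very small residue fields (the extreme being $q=2$) — I would instead place $x^\ast$ at a small positive valuation adjacent to a double root of $\overline{P}$: if $v(\lambda)>0$ take $v(x^\ast)=r>0$ (resp.\ if $\lambda\equiv1$ take $v(x^\ast-1)=r>0$), with $r$ chosen of the parity making $v(P(x^\ast))=v(x^\ast(x^\ast-1))+v(x^\ast-\lambda)$ even, and then choose the unit part of $x^\ast$ so that $x^\ast(x^\ast-1)$ — whose valuation is now odd and whose unit part can be prescribed modulo $N$ — is a non‑norm; this is possible because $N\cap\calO_v^\times$ has index $2$ in $\calO_v^\times$, and one only has to account for the fixed contribution of $(a,\pi)$ to $(a,x^\ast(x^\ast-1))$. (Throughout, the identity $(a,P(x))=0$ in $\Br X$, hence $\calA=(a,c(x-\lambda))$ in $\Br X$, can be used to move the ``free'' factor into the slot of $\calA$ whenever convenient.)

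The main obstacle is exactly this last, small–residue–field case: when $q$ is tiny — and especially when $q=2$, where every $x\in\calO_v$ has $v(P(x))\ge1$ and $x(x-1)$ is never a unit — there is almost no room to prescribe residues, and the choice of $x^\ast$ has to be made by hand while tracking several norm conditions at once. Conceptually, however, everything reduces to one statement: realizing, below some $k_v$‑point of $X$, a value of $x(x-1)$ that is a non‑norm from $k_v(\sqrt a)$, and Lemma~\ref{lem:nontrivialsymbol} together with the index‑$2$ property of the local norm group is exactly what makes that possible. Note finally that, unlike in the odd ramified case, the argument via $E(k_v)\setminus 2E(k_v)\ne\emptyset$ does not carry over, since there the passage from ``non‑square'' to ``non‑norm'' used that the residue characteristic is odd.
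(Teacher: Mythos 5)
Your construction never verifies the one condition that makes this case hard: that the fibre you evaluate on actually has a $k_v$-point. The inference ``$v(P(x))$ even $\Rightarrow$ the conic fibre over $x$ has a $k_v$-point'' is the \emph{unramified} criterion (Lemma \ref{lem:HilbUnram}); here $k_v(\sqrt a)/k_v$ is ramified, so the fibre over $x^\ast$ is nonempty if and only if $P(x^\ast)\in\N(L_w^\times)$, i.e.\ $(a,P(x^\ast))=0$, which is not detected by the parity of $v(P(x^\ast))$. Thus in your case $v(c)=1$, for $x^\ast=1/(\pi u_0)$ one has $(a,P(x^\ast))=(a,c)+(a,\pi u_0)+1/2+(a,1-\lambda\pi u_0)$, and nothing in your setup forces this to vanish; the same defect affects $x^\ast=1/(\pi^2u)$ and the ``small residue field'' fallback, and even your ``automatic'' invariant-$0$ point (that step is repairable, e.g.\ by using the rational point over $\infty$ or over a root of $P$, or by adjusting the unit part of $x$ so that $(a,cx)=0$). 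The genuine difficulty of the even ramified case is precisely the simultaneous requirement that $x(x-1)$ be a non-norm while $P(x)$ is a norm (equivalently, that $c(x-\lambda)$ be a non-norm as well), and your proposal does not address it. The paper's proof exists to solve exactly this: it proves equidistribution of norms in every residue class (Lemma \ref{lem:normdist}, built on Lemma \ref{lem:nontrivialsymbol}), deduces that the sets $A=\{x:x(x-1)\in\N(\calO_w^*)\}$ and $B=\{x:x-\lambda\in\N(\calO_w^*)\}$ each have density $1/2$ (Proposition \ref{prop:limAB}), and then uses the existence of a single point of $X(k_v)$ together with openness of $A,B$ to force $\calO_v\setminus(A\cup B)$ (when $(a,c)=0$) or both $A\setminus B$ and $B\setminus A$ (when $(a,c)\neq0$) to be nonempty; points lying over such $x$ realize both invariant values.

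Two smaller points. Your strengthening of Lemma \ref{lem:nontrivialsymbol} (a \emph{unit} $u_0$ with $(a,1-\pi u_0)$ nontrivial, via $U^{(1)}\subseteq\N$ forcing the index of the unit norms to divide the odd number $q-1$) is correct and pleasant, but it does not bear on the gap above. And the assertion that the case ``$1-\pi^2u\in\N$ for all units $u$'' is confined to very small residue fields is false: it holds exactly when the conductor of $k_v(\sqrt a)/k_v$ divides $\pi^2$, and ramified quadratic extensions of conductor $\pi^2$ exist over $2$-adic fields with arbitrarily large residue field, so that subcase cannot be dismissed as a finite check.
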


We give the proof of this result after establishing some basic facts on the distribution of norms inside $\calO_v$. For the remainder of this section, $v$ will denote a place lying over $2$ where $k_v(\sqrt{a})/k_v$ is ramified. Let $w$ be the place lying over $v$ and $L_w\coloneqq k_v(\sqrt{a})$. Let $\N\colon L_w\to k_v$ denote the norm map.

\subsubsection{Equidistribution of norms among residues}
The subgroup of norms $\{x\in\calO_v^{\times}\mid x\in \N(\calO_w^\times)\}$ has index $2$ inside $\calO_v^\times$.  For any subset $H\subset k_v$, let $H\bmod \pi^n$ denote the set of equivalence classes $H/\sim$ where $h_1\sim h_2$ if $h_1-h_2\in \pi^n\calO_v$.

\begin{Lemma}\label{lem:normdist}
Let $r\in \calO_v$. Then
$$\lim_{n\to\infty} \frac{\#\{x\in\calO_v\mid x\in \N(\calO_w),x\equiv r\bmod \pi\}\bmod \pi^n}{\#\{x\in\calO_v\mid x\equiv r\bmod \pi\}\bmod \pi^n}=\frac{1}{2}.$$
\end{Lemma}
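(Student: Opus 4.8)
The plan is to work with the group $N:=\N(\calO_w^\times)\subset\calO_v^\times$, which by the observation recalled just above the lemma has index $2$ in $\calO_v^\times$. I first note that $N$ is open: since $u\in k_v^\times$ satisfies $\N_{L_w/k_v}(u)=u^2$, we have $(\calO_v^\times)^2\subseteq N$, and $(\calO_v^\times)^2\supseteq 1+\pi^m\calO_v$ for all sufficiently large $m$ (a standard Hensel/Newton argument since $v\mid 2$), so $1+\pi^m\calO_v\subseteq N$ for some $m\ge 1$. Consequently, for every $n\ge m$ the set $N$ is a union of residue classes modulo $\pi^n$, and it accounts for exactly half of the classes inside $\calO_v^\times$.

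The key structural input is that $\mu_{q-1}\subseteq N$, where $\mu_{q-1}\subset\calO_v^\times$ denotes the group of Teichm\"uller representatives. Indeed $\Char\F_q=2$, so $q-1$ is odd, so $\mu_{q-1}$ has odd order and therefore lies in every index-$2$ subgroup of $\calO_v^\times$. Using the decomposition $\calO_v^\times\cong\mu_{q-1}\times(1+\pi\calO_v)$, this shows both that $N$ surjects onto $\calO_v^\times/(1+\pi\calO_v)\cong\F_q^\times$ and that $N\cap(1+\pi\calO_v)$ has index $2$ in $1+\pi\calO_v$.

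Next I would split on $v(r)$. If $v(r)=0$, then every $x$ with $x\equiv r\bmod\pi$ is a unit, and a unit lies in $\N(\calO_w)$ precisely when it lies in $N$ (since $L_w/k_v$ is ramified, $v(\N_{L_w/k_v}(y))=v_{L_w}(y)$, so a norm of valuation $0$ is the norm of a unit). The residue class of $r$ is the coset $r(1+\pi\calO_v)$, which meets $N$ because $N$ surjects onto $\F_q^\times$, hence meets it in a coset of $N\cap(1+\pi\calO_v)$, that is, in exactly half of the $q^{n-1}$ classes modulo $\pi^n$ lying over $r$, for all $n\ge m$. This gives the ratio $\tfrac12$ on the nose.

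If $v(r)\ge 1$, the residue class in question is all of $\pi\calO_v$. Fix a uniformizer $\pi_w$ of $L_w$; since $L_w/k_v$ is ramified, $v(\N(\pi_w))=1$, so $u_0:=\N(\pi_w)/\pi\in\calO_v^\times$. Writing $y\in\calO_w$ of valuation $j$ as $\pi_w^j y'$ with $y'\in\calO_w^\times$, one checks that for $x=\pi^j u$ with $u\in\calO_v^\times$ one has $x\in\N(\calO_w)$ iff $u\,u_0^{-j}\in N$, i.e.\ iff $u$ lies in the coset $u_0^{j}N$ of $N$ in $\calO_v^\times$; hence, once $n-j\ge m$, exactly half of the residue classes modulo $\pi^n$ in the layer $\pi^j\calO_v^\times$ lie in $\N(\calO_w)$. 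Summing over $1\le j\le n-1$ — the layers form a geometric series containing $q^{n-1}-1$ classes in all, and only the boundedly many layers with $n-j<m$, together with the single class of $0$ (which is a norm), fall outside this count — the number of norm-classes modulo $\pi^n$ inside $\pi\calO_v$ is $\tfrac12 q^{n-1}+O(1)$, against a total of $q^{n-1}$, so letting $n\to\infty$ gives the limit $\tfrac12$. The one genuinely fiddly point is this last case, where one must track how norms are distributed across the valuation layers of $\pi\calO_v$ and verify that the error from the finitely many ``boundary'' layers is $O(1)$ uniformly in $n$; the unit case and the reduction $\mu_{q-1}\subseteq N$ are routine.
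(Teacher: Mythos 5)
Your argument is correct, and it reaches the conclusion by a noticeably different route than the paper. The paper's proof pivots on Lemma \ref{lem:nontrivialsymbol}: it uses the element $1-\pi u$ that is not a norm to show $\N(\calO_w)\cap\calO_v^{(1)}$ has index exactly $2$ in $\calO_v^{(1)}$, quotes openness of finite-index norm groups, reduces a general unit residue $r$ to $r=1$ by multiplying by $1/x$, and treats $r\equiv 0$ by rescaling by the valuation-one norm $-\pi$ and recursing on the residue of $x/(-\pi)$. You instead take the stated index-$2$ fact for $N=\N(\calO_w^\times)$ as the starting point, get openness explicitly from $(\calO_v^\times)^2\supseteq 1+\pi^m\calO_v$, and replace the role of Lemma \ref{lem:nontrivialsymbol} by the Teichm\"uller decomposition $\calO_v^\times\cong\mu_{q-1}\times(1+\pi\calO_v)$ together with the observation that $q-1$ is odd, which forces $\mu_{q-1}\subseteq N$ and hence localizes the index-$2$ phenomenon in the principal units; this is the same underlying use of residue characteristic $2$ (squaring is bijective on $\F_q^\times$) packaged group-theoretically. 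For $v(r)\geq 1$ you avoid the paper's induction entirely and count layer by layer, using that $\N(\pi_w)$ has valuation $1$ so each layer $\pi^j\calO_v^\times$ contributes exactly half its classes once $n-j\geq m$, with the remaining layers contributing $q^{m-1}=O(1)$ classes; the bookkeeping there is correct (classes mod $\pi^n$ inside a fixed layer are either contained in or disjoint from the norm set, and the bad layers plus the class of $0$ are bounded independently of $n$). What your version buys is an exact count ($=1/2$ for $n\geq m$) in the unit-residue case and an explicit error term in the $r\equiv 0$ case, at the cost of invoking a bit more structure theory of $\calO_v^\times$; the paper's version is slightly more self-contained in that it derives the needed index-$2$ statement among principal units directly from its Lemma \ref{lem:nontrivialsymbol}, which it needs elsewhere anyway.
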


\begin{proof}
Let $\calO^{(r)}_v\coloneqq \{x\in \calO_v \mid x\equiv r\bmod \pi\}$. We first prove the case when $r=1$. Note that $\N(\calO_w)\cap\calO_v^{(1)}\subset \calO_v^{(1)}$ is a subgroup of index at most $2$ under the multiplicative structure. By Lemma \ref{lem:nontrivialsymbol}, there exists $u\in\calO_v$ such that $1-\pi u\notin \N(\calO_w)$, and so $1-\pi u\in \calO_v^{(1)}\setminus (\N(\calO_w)\cap\calO_v^{(1)})$. Hence it follows $\N(\calO_w)\cap\calO_v^{(1)}$ has index $2$. Consider the quotient map
$$q_n\colon \calO_v^{(1)}\to \calO_v^{(1)}/\pi^n.$$
The image $q_n(\N(\calO_w)\cap\calO_v^{(1)})$ has either index $1$ or $2$. Since norms of finite index are open in the $v$-adic topology, it follows that for $n$ large enough, this image has index $2$. The statement about the limit follows immediately.

Now assume $r\not\equiv0\bmod \pi$. Take any $x\in\calO_v$ such that $x\equiv r\bmod \pi$. Multiplication by $1/x$ gives a bijection $\calO_v^{(r)}\to \calO_v^{(1)}$. Depending on $x$, this map sends $\N(\calO_w)\cap\calO_v^{(r)}$ to either $\N(\calO_w)\cap\calO_v^{(1)}$ or $\calO_v^{(1)}\setminus \N(\calO_w)\cap\calO_v^{(1)}$. Hence, the limit then follows from what we proved for $\calO_v^{(1)}$ above.

Finally assume $r=0$. Then noting that $-\pi\in \N(\calO_w)$ and setting $x'=x/(-\pi)$ gives
$$\{x\in\calO_v\mid x\in \N(\calO_w),x\equiv0\bmod \pi\}=-\pi\{x'\in\calO_v\mid x'\in \N(\calO_w)\}.$$
Hence the limit in question is
$$\lim_{n\to\infty} \frac{\#\{x'\in\calO_v\mid x'\in \N(\calO_w)\}\bmod \pi^{n-1}}{\#\calO_v\bmod \pi^{n-1}}.$$
Now, we can divide this limit according to the image of $x'$ in $\calO_v/\pi\calO_v$. We can apply our previous result for $x'\not\equiv0\bmod \pi$ and argue inductively for those $x'\equiv0\bmod \pi$. Hence, we obtain that the above limit is $1/2$.
\end{proof}

\begin{remark}
As one can see from the proof, when $r\neq0\bmod \pi$, the quantity in question is in fact equal to $1/2$ for $n$ sufficiently large enough.
\end{remark}

The following limit follows immediately from Lemma \ref{lem:normdist}.
$$\lim_{n\to\infty} \frac{\#\{x\in\calO_v^\times\mid x\in \N(\calO_w)\}\bmod \pi^n}{\#\calO_v^\times \bmod \pi^n}=\frac{1}{2}.$$

\begin{remark}
The above lemma fails when $v$ does not lie over $2$. Indeed whether a unit $x$ is a norm or not can be determined by looking modulo $\pi$.
\end{remark}

Let $\calO_v^*=\calO_v\setminus\{0\}$ be the nonzero elements (not to be confused with $\calO_v^\times$, the nonzero units). Define the sets
$$A=\{x\in\calO_v\mid x(x-1)\in \N(\calO_w^*)\},\quad B=\{x\in\calO_v\mid x-\lambda\in \N(\calO_w^*)\}.$$
Since $x\mapsto x(x-1)$ and $x\mapsto x-\lambda$ are continuous endomorphisms on $\calO_v$, both $A,B$, being inverse images of the open and closed subset $\N(\calO_w^*)$, open and closed inside $\calO_v\setminus\{0,1\}$ and $\calO_v\setminus\{\lambda\}$ respectively. Our first goal is to establish the following.

\begin{prop}\label{prop:limAB}
$$\lim_{n\to\infty}  \frac{\# A \bmod \pi^n }{\#\calO_v \bmod \pi^n} =\lim_{n\to\infty}  \frac{\# B \bmod \pi^n }{\#\calO_v \bmod \pi^n} =\frac{1}{2}$$
\end{prop}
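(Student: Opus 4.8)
The plan is to compute each limit separately, and in both cases to reduce the question to the equidistribution statement of Lemma~\ref{lem:normdist} via the structure of the polynomial maps $x \mapsto x(x-1)$ and $x \mapsto x - \lambda$ on $\calO_v$. The map $x \mapsto x-\lambda$ is a bijection $\calO_v \to \calO_v$ translating $\lambda$ to $0$, so $B$ is literally the translate $\lambda + \N(\calO_w^*)\cap(\text{something})$; more precisely $B \bmod \pi^n$ is in bijection with $\N(\calO_w^*) \bmod \pi^n$ intersected with the appropriate truncation, so the limit for $B$ is immediate from the version of Lemma~\ref{lem:normdist} stated just before this proposition (the equidistribution of $\N(\calO_w)$ among all of $\calO_v$, after one also accounts for the single point $x=\lambda$, which contributes nothing in the limit). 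So the substance is the computation for $A$.

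For $A$, I would stratify $\calO_v$ according to the valuations $v(x)$ and $v(x-1)$, noting that at most one of these can be positive. On the open set $U_0 = \{x : v(x) = v(x-1) = 0\}$ (units that are also units mod $\pi$ after subtracting $1$, i.e. $x \not\equiv 0, 1 \bmod \pi$), the product $x(x-1)$ is a unit, and I claim the map $x \mapsto x(x-1)$ is, locally on each residue class mod $\pi$, a measure-scaling bijection onto a union of residue classes, so that membership of $x(x-1)$ in $\N(\calO_w^\times)$ is governed by Lemma~\ref{lem:normdist}: within $U_0$ the proportion landing in the norm subgroup is $1/2$. The key local fact is that near a point $x_0$ with $2x_0 - 1 \in \calO_v^\times$ (automatic since $v \mid 2$ forces $2x_0 - 1 \equiv 1 \bmod \pi$ to be a unit) the derivative $2x-1$ of $x(x-1)$ is a unit, so $x \mapsto x(x-1)$ is an open map preserving the $\pi$-adic measure up to a unit scalar, hence pushes the uniform measure to the uniform measure on its image; composing with Lemma~\ref{lem:normdist} gives density $1/2$ on $U_0$. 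On the complementary pieces $\{v(x) > 0\}$ and $\{v(x-1) > 0\}$: on the first, $x-1$ is a unit $\equiv -1$, so $x(x-1) \in \N(\calO_w^*)$ iff $x \in \N(\calO_w^*) \cdot(x-1)$, and since $-1$ may or may not be a norm one gets, running over this measure-zero-in-the-limit... wait, $\{v(x)>0\} = \pi\calO_v$ has measure $\tfrac12$ relative to $\calO_v$ only if $q = 2$; in general it has measure $1/q$, still positive. On $\pi\calO_v$ write $x = \pi x'$; then $x(x-1) = \pi x'(\pi x' - 1)$, and $\pi x' - 1$ is a unit in $1 + \pi\calO_v$, so $x(x-1) \in \N(\calO_w^*)$ iff $\pi x'(\pi x'-1) \in \N(\calO_w^*)$, and since $\pi$'s norm-status and the unit $\pi x' - 1 \in \calO_v^{(1)}$'s norm-status vary, I would again invoke Lemma~\ref{lem:normdist} (including its $r = 1$ and $r = 0$ cases recursively) to conclude the density within $\pi\calO_v$ is again $1/2$; symmetrically for $1 + \pi\calO_v$. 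Summing the contributions from a partition of $\calO_v$ each of which has norm-density $1/2$ yields $1/2$ overall.

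The cleanest way to organize all of this uniformly is probably to prove a small general lemma: if $\phi\colon \calO_v \to \calO_v$ is a polynomial map whose derivative $\phi'$ has only finitely many zeros in $\calO_v$ (equivalently $\phi' \neq 0$), then for any open-closed subgroup-coset-like subset $S \subseteq \calO_v$ whose truncations $S \bmod \pi^n$ have density $\delta$, the preimage $\phi^{-1}(S)$ has density $\delta$ as well; the point being that away from the zero locus of $\phi'$, $\phi$ is locally an open immersion scaling Haar measure by a unit, and the zero locus plus its shrinking neighborhoods contribute zero in the limit. Applying this with $\phi(x) = x(x-1)$ or $\phi(x) = x - \lambda$ and $S = \N(\calO_w^*)$ (which is open-closed in $\calO_v^*$ and whose density is $1/2$ by the displayed corollary to Lemma~\ref{lem:normdist}, after checking the behavior at $0$) finishes both limits at once.

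The main obstacle is the local surjectivity/measure-preservation step at the finitely many bad points (the zero $x = 1/2$ of $2x - 1$, which lies in $\calO_v$ precisely when $v \mid 2$ — exactly our case!). At $x_0 = 1/2$ one has $\phi'(x_0) = 0$, so $\phi(x) = x(x-1)$ behaves quadratically: $\phi(x) = \phi(x_0) + (x - x_0)^2$. I would handle a $\pi$-adic ball around $1/2$ separately, showing its image under $\phi$ is $-1/4 + (\calO_v)^2$-shifted and that the density of $x$ in this ball with $x(x-1) \in \N(\calO_w^*)$ is still $1/2$ — this reduces to understanding the density of $\N(\calO_w^*) \bmod \pi^n$ restricted to a coset of squares, which again follows from Lemma~\ref{lem:normdist} since the norm subgroup is a subgroup of index $2$ containing all squares. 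Since this troublesome ball has positive but bounded density, and on it the norm-density is again $1/2$, it merges harmlessly with the rest; but getting the quadratic-point analysis right (and confirming that $1/2 \in \calO_v$ really is the only zero of $2x-1$, with multiplicity one) is where the care is needed.
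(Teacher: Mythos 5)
Your route is genuinely different from the paper's. The paper linearizes the quadratic condition by the substitution $f(x)=1-1/x$: since $x(x-1)=x^2f(x)$, one has $x\in A$ iff $f(x)\in \N(L_w^\times)$, and $f$ permutes the three pieces $\calO_v'$, $\calO_v^{(0)}$, $\calO_v^{(1)}$ compatibly with classes mod $\pi^n$, so Lemma~\ref{lem:normdist} plus the symmetry $x\mapsto 1-x$ finishes the computation for $A$; the computation for $B$ is the translation argument you also give. Your local-analytic route can be made to work, and the reason is precisely the parenthetical you make early on: because $v\mid 2$, the derivative $2x-1\equiv -1\bmod\pi$ is a unit at \emph{every} point of $\calO_v$, so for $x\equiv x'\bmod \pi$ one has $x(x-1)-x'(x'-1)=(x-x')(x+x'-1)$ with unit second factor. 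Hence $\phi(x)=x(x-1)$ maps each residue class $r+\pi\calO_v$ isometrically and bijectively onto the single class $\phi(r)+\pi\calO_v$, compatibly with classes mod $\pi^n$, and $A\cap(r+\pi\calO_v)$ corresponds exactly to $\N(\calO_w^*)\cap(\phi(r)+\pi\calO_v)$. Applying Lemma~\ref{lem:normdist} classwise (with residue $\phi(r)\not\equiv 0$ when $r\not\equiv 0,1$, and the $r=0$ case for the two classes $\pi\calO_v$ and $1+\pi\calO_v$, both of which map onto $\pi\calO_v$) and summing over $r$ gives the limit $1/2$ for $A$. This is arguably more direct than the Möbius trick, at the cost of using residue characteristic $2$ essentially (which is the standing hypothesis here anyway).

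Two concrete problems in your write-up, though. First, the ``small general lemma'' you propose as the clean organizing principle is false as stated: take $\phi(x)=x^2$ and $S=\N(\calO_w^\times)$. Then $\phi'\neq 0$, $S$ is open-closed of density $1/2$ among units, but $\phi^{-1}(S)\supseteq\calO_v^\times$ because every square is a norm, so the preimage has density $1-1/q$, not $1/2$. Generic \'etaleness only makes $\phi$ measure-preserving ball-by-ball, and since $\N(\calO_w^\times)$ is \emph{open}, its density inside sufficiently small balls is $0$ or $1$, so pushing forward Haar measure proves nothing by itself. What rescues your argument is the special feature above --- $\phi$ carries classes mod $\pi$ bijectively onto classes mod $\pi$, which is exactly the scale at which Lemma~\ref{lem:normdist} gives equidistribution --- and the proof must be phrased that way rather than via the general lemma (your recursive treatment of $\pi\calO_v$ via $x=\pi x'$ is likewise vaguer than needed, since the norm status of $-\pi x'(1-\pi x')$ is not determined by $x'\bmod\pi$; the class-bijection observation with the $r=0$ case of Lemma~\ref{lem:normdist} handles $\pi\calO_v$ and $1+\pi\calO_v$ directly). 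Second, your final paragraph rests on a sign slip: when $v\mid 2$ we have $v(1/2)=-v(2)<0$, so $1/2\notin\calO_v$; the critical point of $\phi$ lies in $\calO_v$ precisely when $v\nmid 2$. So the ``main obstacle'' you isolate (quadratic behavior near $x=1/2$) simply does not occur in the case at hand --- it contradicts your own earlier, correct remark that $2x-1$ is automatically a unit --- and that paragraph should be deleted.
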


\begin{proof} Since $\{x-\lambda\mid x\in\calO_v\}=\calO_v$, the limit for $B$ follows immediately in view of Lemma \ref{lem:normdist}. To prove the limit for $A$, we first divide $\calO_v$ in the following way
$$\calO_v^{(0)}=\{x\in \calO_v\mid x\equiv0\bmod \pi\},\calO_v^{(1)}=\{x\in \calO_v\mid x\equiv1\bmod \pi\},\calO_v'=\calO_v\setminus (\calO_v^{(0)}\cup \calO_v^{(1)}).$$
We divide the set $A$ in the analogous way
$$A^{(0)}=\{x\in A\mid x\equiv0\bmod \pi\},A^{(1)}=\{x\in A\mid x\equiv1\bmod \pi\},A'=A\setminus (A^{(0)}\cup A^{(1)}).$$
It suffices to show each of the following limits,
$$\lim_{n\to\infty}\frac{\# A^{(0)}\bmod \pi^n}{\#\calO_v^{(0)}\bmod \pi^n}=\lim_{n\to\infty}\frac{\# A^{(1)}\bmod \pi^n}{\#\calO_v^{(1)}\bmod \pi^n}=\lim_{n\to\infty}\frac{\# A'\bmod \pi^n}{\#\calO_v'\bmod \pi^n}=\frac{1}{2}.$$
Define the map $f\colon k_v^\times \to k_v\setminus\{1\}$ given by $f(x)=1-1/x$. Observe that $f$ is a bijection. Moreover, for any $x\in\calO_v^*\setminus \{1\}$, $x(x-1)/f(x)=x^2$. Hence, $x(x-1)\in \N(\calO_w^*)$ if and only if $f(x)\in \N(L_w^\times)$. In particular, this means for $x\in A$ if and only if $f(x)\in \N(L^\times_w)$.

Let $n$ be a positive integer and $x\in \calO_v^\times$. Then
$$f(x+\pi^n\calO_v)=1-1/x+\pi^{n}\calO_v=y+\pi^{n}\calO_v$$
where $y=1-1/x\in \calO_v$. Observe that $y\not\equiv1\bmod \pi$. Hence, $f$ induces a bijection between the following two sets,
$$\{x+\pi^n\calO_v\mid x\in \calO_v^\times\}\xleftrightarrow{f} \{y+\pi^n\calO_v\mid y\in\calO_v,y\not\equiv1\bmod \pi\}.$$
We may further decompose into the following bijections
$$\{x+\pi^n\calO_v\mid x\in \calO_v,x\not\equiv0,1\bmod \pi\}\xleftrightarrow{f}\{y+\pi^n\calO_v\mid y\in\calO_v,y\not\equiv0,1\bmod \pi\},$$
$$\{x+\pi^n\calO_v\mid x\in \calO_v,x\equiv1\bmod \pi\}\xleftrightarrow{f} \{y+\pi^n\calO_v\mid y\in\calO_v,y\equiv 0\bmod \pi\}.$$
Written another way, $f$ induces bijections
$$\calO_v'\bmod \pi^n\xleftrightarrow{f} \calO_v'\bmod \pi^n,$$
$$\calO_v^{(1)}\bmod \pi^n\xleftrightarrow{f} \calO_v^{(0)}\bmod \pi^n.$$
Moreover, under this bijection, $A\bmod \pi^n$ maps to
$$A'\bmod \pi^n\xleftrightarrow{f} \{y\in\calO_v\mid y\in \N(\calO_w), y\not\equiv 0,1\bmod \pi\}\bmod \pi^n,$$
$$A^{(1)}\bmod \pi^n\xleftrightarrow{f} \{y\in\calO_v\mid y\in \N(\calO_w), y\equiv 0\bmod \pi\}\bmod \pi^n.$$
Therefore,
$$\lim_{n\to\infty}\frac{\# A'\bmod \pi^n}{\#\calO_v'\bmod \pi^n}=\lim_{n\to\infty}\frac{\# \{y\in\calO_v\mid y\in \N(\calO_w), y\not\equiv 0,1\bmod \pi\}\bmod \pi^n}{\#\calO_v'\bmod \pi^n}=\frac{1}{2},$$
$$\lim_{n\to\infty}\frac{\# A^{(1)}\bmod \pi^n}{\#\calO_v^{(1)}\bmod \pi^n}=\lim_{n\to\infty}\frac{\# \{y\in\calO_v\mid y\in \N(\calO_w), y\equiv 0\bmod \pi\}\bmod \pi^n}{\#\calO_v^{(0)}\bmod \pi^n}=\frac{1}{2}$$
by Lemma \ref{lem:normdist}. It remains to prove the limit for $A^{(0)}$. For this, consider the map $g\colon \calO_v\to \calO_v$ given by $g(x)=1-x$. This is clearly a bijection and sends $\calO_v^{(0)}$ to $\calO_v^{(1)}$. Moreover, since $x(x-1)=(g(x))(g(x)-1)$, $g$ sends $A^{(0)}$ bijectively to $A^{(1)}$. Hence,
\[\lim_{n\to\infty}\frac{\# A^{(0)}\bmod \pi^n}{\#\calO_v^{(0)}\bmod \pi^n}=\lim_{n\to\infty}\frac{\# A^{(1)}\bmod \pi^n}{\#\calO_v^{(1)}\bmod \pi^n}=\frac{1}{2}.\qedhere\]
\end{proof}

\subsubsection{Applying the equidistribution results}

Finally, we return to the proof of Proposition \ref{prop:linearevenram}.

\begin{proof}[Proof of Proposition \ref{prop:linearevenram}]
We first consider the case when $(a,c)=0$. This means in particular that the fiber over $\infty\in\P^1$ has a point $Q\in X_{\infty}(k_v)$ and $\inv_v\calA(Q)=\inv_v\calB(Q)=0$. So it suffices to find another point with invariant $1/2$.

\begin{Lemma}\label{lem:AmeetB}
$$\lim_{n\to\infty} \frac{\#\calO_v\setminus(A\cup B)\bmod \pi^n}{\#\calO_v \bmod \pi^n}>0.$$
\end{Lemma}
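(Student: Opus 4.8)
The plan is to reduce the statement to the single assertion $A\cap B\neq\emptyset$, and then to obtain that from a contradiction argument built around the product $P(x)=c\,x(x-1)(x-\lambda)$.

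\emph{Reduction.} Recall that $A$ is open and closed in $\calO_v\setminus\{0,1\}$ and $B$ is open and closed in $\calO_v\setminus\{\lambda\}$. Hence $A\cup B$ is open in $\calO_v$ and closed in $\calO_v\setminus\{0,1,\lambda\}$, so $\calO_v\setminus(A\cup B)$ is open in $\calO_v\setminus\{0,1,\lambda\}$ and the limit in the statement exists. It will therefore be enough to exhibit one point $x_0\in\calO_v\setminus(A\cup B)$ with $x_0\notin\{0,1,\lambda\}$: a ball around $x_0$ then lies in $\calO_v\setminus(A\cup B)$, forcing the limit to be positive. I would prove such a point exists by contradiction. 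If $A\cup B\supseteq\calO_v\setminus\{0,1,\lambda\}$ then every residue class mod $\pi^n$ (being infinite) meets $A\cup B$, so $\#(A\cup B)\bmod\pi^n=\#\calO_v\bmod\pi^n$; combining the classwise inequality
$$\#A\bmod\pi^n+\#B\bmod\pi^n\ \geq\ \#(A\cup B)\bmod\pi^n+\#(A\cap B)\bmod\pi^n$$
with Proposition~\ref{prop:limAB}, dividing by $\#\calO_v\bmod\pi^n$ and passing to the limit gives $\#(A\cap B)\bmod\pi^n/\#\calO_v\bmod\pi^n\to0$. Since $A\cap B$ is open, this forces $A\cap B=\emptyset$, hence $\calO_v\setminus\{0,1,\lambda\}=A\sqcup B$.

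\emph{Contradiction.} Under $\calO_v\setminus\{0,1,\lambda\}=A\sqcup B$, for each $x\in\calO_v\setminus\{0,1,\lambda\}$ exactly one of $x(x-1)$, $x-\lambda$ is a norm from $L_w=k_v(\sqrt{a})$, i.e.\ $\inv_v(a,x(x-1))+\inv_v(a,x-\lambda)=1/2$. Multiplying the two factors together with $c$ and using bilinearity of the symbol together with the hypothesis $(a,c)=0$, this reads
$$\inv_v(a,P(x))=\inv_v\big(a,\,c\,x(x-1)(x-\lambda)\big)=1/2\qquad\text{for all }x\in\calO_v\setminus\{0,1,\lambda\}.$$
I would contradict this by specializing to $x$ of large valuation. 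Fix $M\geq 1$ with $1+\pi^M\calO_v\subseteq\N(\calO_w^\times)$ (possible, this norm subgroup being open of finite index) and recall $v(\lambda)\geq0$. For $v(x)=j\geq M+v(\lambda)$ one has $x-1=-(1-x)$ and $x-\lambda=-\lambda(1-x/\lambda)$ with $1-x,\ 1-x/\lambda\in 1+\pi^M\calO_v\subseteq\N(\calO_w^\times)$, so $\inv_v(a,x-1)=\inv_v(a,-1)$ and $\inv_v(a,x-\lambda)=\inv_v(a,-\lambda)$, whence
$$\inv_v(a,P(x))=\inv_v(a,x)+\inv_v(a,-1)+\inv_v(a,-\lambda)=\inv_v(a,x)+\inv_v(a,\lambda).$$
Comparing the two displays, $\inv_v(a,x)$ would be forced to equal the constant $1/2+\inv_v(a,\lambda)$ for all $x$ with $v(x)\geq M+v(\lambda)$; but for $x=\pi^j u$ with $u\in\calO_v^\times$ one has $\inv_v(a,x)=j\,\inv_v(a,\pi)+\inv_v(a,u)$, and $u\mapsto\inv_v(a,u)$ surjects onto $\{0,1/2\}$ since $\N(\calO_w^\times)$ has index $2$ in $\calO_v^\times$. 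Hence $\inv_v(a,x)$ is already nonconstant on the annulus $v(x)=j$, which is the contradiction sought.

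I expect the reduction to be the delicate part: one has to be confident that the counting densities combine additively and that the limit in the statement exists, so that everything collapses to the existence statement $A\cap B\neq\emptyset$. The contradiction itself is short; its one nonobvious move is to recombine $x(x-1)$ and $x-\lambda$ into $P(x)$ and to observe that near $x=0$ the tail factors settle into fixed norm classes while $x$ runs over all norm classes, so the hypothesis would pin down the value $\inv_v(a,x)$ itself, which it cannot.
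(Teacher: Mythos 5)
Your argument is correct, and its decisive step is genuinely different from the paper's. The paper gets the needed point for free from $X(k_v)\neq\emptyset$: a $k_v$-point on a smooth fiber with $x$-coordinate in $\calO_v$ gives $x\notin\{0,1,\lambda\}$ with $x(x-1)(x-\lambda)\in\N(\calO_w)$ (using $(a,c)=0$), and then either $x\in\calO_v\setminus(A\cup B)$, where openness finishes, or $x\in A\cap B$, in which case $A\cap B$ is open and nonempty and Proposition \ref{prop:limAB} makes the density of the complement equal to that of $A\cap B$, hence positive. You instead argue by contradiction and unconditionally: if $A\cup B$ covered $\calO_v\setminus\{0,1,\lambda\}$, your residue-class inequality together with Proposition \ref{prop:limAB} forces $A\cap B=\emptyset$, and the resulting identity $\inv_v(a,P(x))=1/2$ for all $x\neq0,1,\lambda$ is refuted at $x$ of large valuation, where $(a,x-1)=(a,-1)$ and $(a,x-\lambda)=(a,-\lambda)$ while $(a,x)$ is nonconstant on each annulus $v(x)=j$ because $\N(\calO_w^\times)$ has index $2$ in $\calO_v^\times$ in the ramified case. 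In effect you re-prove, by a Hilbert-symbol computation, the existence of an $x\in\calO_v$ with $P(x)$ a norm, which the paper extracts from a rational point; your route is longer but does not use $X(k_v)\neq\emptyset$ (which here is automatic anyway, the fiber over $\infty$ having a point since $(a,c)=0$). Two small points to tighten: the existence of the limit does not follow from openness as you suggest, but it is automatic, since $n\mapsto\#(S\bmod\pi^n)/\#(\calO_v\bmod\pi^n)$ is non-increasing for any set $S$ (or read the statement with $\liminf$, which your ball argument bounds below); and your translation of membership in $A$, $B$ into symbol conditions uses that a nonzero element of $\calO_v$ lies in $\N(\calO_w^*)$ exactly when it lies in $\N(L_w^\times)$ (a norm of nonnegative valuation is the norm of an integral element), which deserves a sentence.
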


\begin{proof}
Since $X(k_v)\neq\emptyset$ by assumption, there exists $x\in \calO_v$ with $x\neq0,1,\lambda$ such that $x(x-1)(x-\lambda)\in \N(\calO_w)$. This means either $x\in \calO_v\setminus (A\cup B)$ or $x\in A\cap B$. In the former case, the lemma then follows since $A, B$ are open in $\calO_v$. In the latter case $\lim_{n\to\infty} (\#A\cap B)/(\#\calO_v \bmod \pi^n)>0$, and so combining with Proposition \ref{prop:limAB} gives the desired result. 
\end{proof}

Let $x\in \calO_v\setminus (A\cup B)$ and $x\neq0,1,\lambda$. This means $x(x-1)(x-\lambda)\in \N(L_w)$ but $x(x-1)\notin \N(L_w)$. Let $Q\in X(k_v)$ be a point with $x$-coordinate is $x$. Then
$$\inv_v\calA(Q)=(a,x(x-1))=1/2.$$

For the case $(a,c)\neq0$, we have the following

\begin{Lemma}\label{lem:AminusB}
$$\lim_{n\to\infty} \frac{\#(A\setminus B)\bmod \pi^n}{\#\calO_v \bmod \pi^n}>0,\quad \lim_{n\to\infty} \frac{\#(B\setminus A)\bmod \pi^n}{\#\calO_v \bmod \pi^n}>0.$$
\end{Lemma}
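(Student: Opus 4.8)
The plan is to combine the equidistribution established in Proposition~\ref{prop:limAB} with an explicit computation of $x(x-1)(x-\lambda)$ for $x$ very close to $\lambda$.

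First I would set up a reduction. Since $A$ and $B$ are open and closed in $\calO_v\setminus\{0,1\}$ and in $\calO_v\setminus\{\lambda\}$ respectively, the three sets $A\cap B$, $A\setminus B$ and $B\setminus A$ are open and closed in $\calO_v\setminus\{0,1,\lambda\}$, so each has a well-defined density
$$\delta(S):=\lim_{n\to\infty}\frac{\#\, S\bmod\pi^n}{\#\,\calO_v\bmod\pi^n},$$
and $\delta$ is finitely additive over them: $\delta(A)=\delta(A\cap B)+\delta(A\setminus B)$ and $\delta(B)=\delta(A\cap B)+\delta(B\setminus A)$. By Proposition~\ref{prop:limAB}, $\delta(A)=\delta(B)=\tfrac12$, so subtracting gives $\delta(A\setminus B)=\delta(B\setminus A)$. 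Hence it suffices to prove that one of these is positive, and since both sets are open in $\calO_v\setminus\{0,1,\lambda\}$ it is enough to produce a single $x\in\calO_v$ with $x\neq0,1,\lambda$ lying in $(A\setminus B)\cup(B\setminus A)$. Writing $[t]$ for the class of $t\in k_v^\times$ in $k_v^\times/\N(L_w^\times)\isom\Z/2\Z$, and using that $\N(\calO_w^*)$ agrees with $\N(L_w^\times)$ on $\calO_v\setminus\{0\}$, this membership amounts to $[x(x-1)]\neq[x-\lambda]$, equivalently $x(x-1)(x-\lambda)\notin\N(L_w^\times)$.

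Then I would produce such an $x$ near $\lambda$. Put $x=\lambda+s$ with $v(s)$ large; then
$$x(x-1)(x-\lambda)=s\bigl(\lambda(\lambda-1)+s(2\lambda-1)+s^2\bigr)=s\,\lambda(\lambda-1)\Bigl(1+\frac{s(2\lambda-1+s)}{\lambda(\lambda-1)}\Bigr).$$
Here $\lambda(\lambda-1)$ is a nonzero element of $\calO_v$ since $\lambda\neq0,1$, and $2\lambda-1$ is a unit since $v(2\lambda)\ge1$; hence for $v(s)$ sufficiently large the last factor lies in $1+\pi^N\calO_v$ with $N$ as large as we please, and in particular is a norm from $L_w^\times$ (the group of norms is open and contains $1$). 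Therefore $[x(x-1)(x-\lambda)]=[s]+[\lambda(\lambda-1)]$. Recall that $\N(\calO_w^\times)$ has index $2$ in $\calO_v^\times$, so $\calO_v^\times$, and hence every coset $\pi^m\calO_v^\times$, meets both classes of $k_v^\times/\N(L_w^\times)$. Choosing $m$ large and $s\in\pi^m\calO_v^\times$ with $[s]\neq[\lambda(\lambda-1)]$ then makes $x(x-1)(x-\lambda)$ a non-norm, while $x=\lambda+s$ avoids $0,1,\lambda$; thus $x\in(A\setminus B)\cup(B\setminus A)$, and by the reduction step this proves the lemma.

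The main obstacle, or rather the only point demanding care, is the assertion in the reduction that the densities of $A\cap B$, $A\setminus B$ and $B\setminus A$ exist and add up correctly. This rests on the fact that a subset of $\calO_v$ which is open and closed away from a finite set is Haar-measurable with density equal to its normalized Haar measure, so that finite additivity of Haar measure applies; this is standard but deserves an explicit sentence, since for a general set $S$ the limit $\lim_{n\to\infty}\#(S\bmod\pi^n)/\#(\calO_v\bmod\pi^n)$ need not equal the Haar measure of $S$. Everything else is an elementary local computation.
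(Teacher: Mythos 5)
Your proposal is correct, and it reaches the witness element of the symmetric difference by a different route than the paper. Both arguments share the same reduction: by Proposition~\ref{prop:limAB} the densities of $A$ and $B$ are each $1/2$, so additivity gives equal densities for $A\setminus B$ and $B\setminus A$, and it then suffices to exhibit a single $x\in\calO_v\setminus\{0,1,\lambda\}$ with $x(x-1)(x-\lambda)\notin\N(L_w^\times)$, openness doing the rest (your explicit remark that these sets are clopen away from finitely many points, hence have well-defined densities equal to their Haar measure, is a point the paper passes over silently). Where you diverge is in producing that $x$: the paper, which proves this lemma inside the case $(a,c)\neq 0$ of Proposition~\ref{prop:linearevenram}, extracts it from the assumption $X(k_v)\neq\emptyset$ --- any smooth-fiber point over $x\in\calO_v$ then forces exactly one of $x(x-1)$, $x-\lambda$ to be a norm --- whereas you construct $x=\lambda+s$ directly, using that the last factor in $s\,\lambda(\lambda-1)\bigl(1+\tfrac{s(2\lambda-1+s)}{\lambda(\lambda-1)}\bigr)$ is a norm for $v(s)$ large (openness of $\N(L_w^\times)$) and that every coset $\pi^m\calO_v^\times$ meets both norm classes (existence of non-norm units in the ramified case, which is the index-$2$ fact stated at the start of the subsection). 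Your construction buys unconditionality: the lemma holds without invoking the surface, the hypothesis $X(k_v)\neq\emptyset$, or the case split on $(a,c)$, and is thus more self-contained; the paper's version is shorter in context because the needed $k_v$-point is already at hand in the ambient proof. All the individual steps of your computation (the identity for $x(x-1)(x-\lambda)$, the norm-class bookkeeping, and the passage from one nonempty open set to positive density) check out.
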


\begin{proof}
By Proposition \ref{prop:limAB}, it suffices to show at least one of the limit is positive since that will imply the other is positive as well. Since $X(k_v)\neq\emptyset$ by assumption, there exists $x\in \calO_v$ with $x\neq0,1,\lambda$ such that one of the following two cases happen

{\it Case 1, $x(x-1)\in \N(\calO_w)$ and $x-\lambda\notin \N(\calO_w)$}  OR

{\it Case 2, $x(x-1)\notin \N(\calO_w)$ and $x-\lambda\in \N(\calO_w)$}.

Either case, at least one of $A\setminus B$ or $B\setminus A$ is nonempty, and the limit must also be positive since both are open in $\calO_v$.
\end{proof}

To finish the proof, we choose $x_1\in A\setminus B$ and $x_2\in B\setminus A$. Then there exists $Q_1,Q_2\in X(k_v)$ with $x$ coordinate corresponding to $x_1,x_2$ respectively. It follows that
\[\inv_v\calA(Q_1)=0,\quad \inv_v\calA(Q_2)=1/2.\qedhere\]
\end{proof}

\subsection{Proof of Theorem \ref{thm:SurjLinear}}

\begin{proof}[Proof of Theorem \ref{thm:SurjLinear}]
Let $X$ be a Ch\^atelet surface where $P(x)$ has the form \eqref{eqn:linearfactors}. First assume that for every nonarchimedian place $v$, either $X$ has good reduction or $\sqrt{a}\in k_v$. In the first case, $k_v(\sqrt{a})/k_v$ must be unramified, so that any generator $\alpha$ listed in \S\ref{sec:Brauergroup} is in the kernel of $\Br X\to \Br X_{k_v^{\textup{nr}}}$ where $k_v^{\textup{nr}}$ is the maximal unramified extension of $k_v$. Then by \cite{CTS13}*{Lemma 2.2} the evaluation map $\ev_\alpha\colon X(k_v)\to \Br k_v[2]$ must be constant for any $\alpha\in \Br X$. In the latter case, $\ev_\alpha$ is also constant since the Brauer classes listed in \S3 are trivial over $k_v$. Moreover, if $a>0$ or $k$ does not have a real embedding, then for any archimedian place $v$, $\sqrt{a}\in k_v$, so the evaluation map is constant again. Since $X(k)$ is clearly nonempty (take any root of $P(x)$), it follows $X$ satisfies weak approximation.

Conversely, assume either $X$ has a place $v$ of bad reduction with $\sqrt{a}\notin k_v$ or $v$ is a real place and $a<0$. To show failure of weak approximation, it suffices to show that there is a Brauer--Manin obstruction given by the surjectivity of the evaluation map $\ev_\calA\colon X(k_v)\to \Br k_v[2]$.

If $a<0$, then the evaluation map is surjective at $v$ since taking $x$ such that exactly two of $x,x-1,x-\lambda$ is negative gives rise to a real point $Q$ where either $\ev_\calA(Q)$ or $\ev_\calB(Q)$ is nontrivial. On the other hand, taking $x$ so that all $x,x-1,x-\lambda$ are positive gives rise to a real point $Q$ such that $\ev_\calA(Q)=\ev_\calB(Q)=0$.

Now assume $v$ is a place of bad reduction. If $k(\sqrt{a})/k$ is unramified at $v$, then one of $v(c),v(\lambda),v(\lambda-1)$ must be nonzero. Then Proposition \ref{prop:linearunramA} implies the result. If $k(\sqrt{a})/k$ is ramified then Proposition \ref{prop:linearoddram} for odd $v$ or Proposition \ref{prop:linearevenram} for even $v$ gives the desired result.
\end{proof}

\begin{proof}[Proof of Theorem \ref{thm:BrauerVanishIntro}(1)]
Let $L$ be the splitting field of $P(x)$ as stated in the theorem. If $v$ is a place of bad reduction as given in the hypothesis, then there exists a place $w$ of $L$ lying over $v$ such that $X_L$ has bad reduction at $w$ and $a\notin L_w^{\times2}$. Theorem \ref{thm:SurjLinear} then implies that $X_L$ fails weak approximation.
\end{proof}

\begin{proof}[Proof of Theorem \ref{thm:BrauerVanishIntro}(2)(a)]
Let $K/k$ be a finite extension. By our assumption, either $P(x)$ has an irreducible factor of degree $3$ or splits completely over $K$. In the first case, there is no obstruction to weak approximation by Proposition \ref{Brauergroupgenerators}. In the latter case, if $\sqrt{a}\in K$, then $X_K$ is rational so weak approximation holds as well. Hence, assume $\sqrt{a}\notin K$. Then Theorem \ref{thm:BrauerVanishIntro} implies that $X_K$ satisfies weak approximation.
\end{proof}

\begin{example}\label{ex:perp}
Let us give an example of a surface satisfying the conditions of Theorem \ref{thm:BrauerVanishIntro}(2)(a). Let $\omega$ be a primitive cube root of unity and let $k=\Q(\omega,\sqrt{97})$. Then $k$ has class number $2$ and so the Hilbert class field $L=k(\sqrt{a})$ is an everywhere unramified quadratic extension. Let $X$ be the Ch\^atelet surface
\[
y^2-az^2=x(x-1)(x+\omega).
\]
Observe that $X$ has everywhere good reduction and satisfies the conditions given in Theorem \ref{thm:BrauerVanishIntro}(2)(a). Hence $X$ satisfies weak approximation over all finite field extensions.
\end{example}

\section{Weak approximation in the quadratic case}\label{sec:quadratic}

In this section, we consider the case when $P(x)$ factors as
$$y^2-az^2=cP_1(x)P_2(x)$$
where $P_1,P_2$ are irreducible monic quadratic polynomials. By \S2, the Brauer group modulo $\Br_0 X$ is generated by the quaternion algebra
$$\calC=(a,P_1(x))=(a,cP_2(x)).$$
If the above Brauer class is constant (meaning it comes from $\Br k$), then $X$ satisfies weak approximation. Hence, for the rest of this section, we assume that the class above is nonconstant. This is equivalent to the fact that $\sqrt{a}$ is not in the splitting field of $P_1(x)$ or $P_2(x)$. Moreover, after a change of coordinates, we assume the coefficients of $P_1(x),P_2(x)$ are in $\calO_k$.

Let $v$ be a nonarchimedian place of $k$ with odd residue characteristic and $\pi\in\calO_k$ a uniformizer.

\begin{Lemma}\label{lem:squares}
Let $R(x)\in \F_v[x]$ be a monic irreducible quadratic polynomial. Then for exactly $(q-1)/2$ many of the values $x\in\F_v$, $R(x)$ is a square in $\F_v$.
\end{Lemma}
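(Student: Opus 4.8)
The plan is to count, for a monic irreducible quadratic $R(x) \in \F_v[x]$, how many $x \in \F_v$ give a square value $R(x)$. First I would complete the square: since $v$ has odd residue characteristic, $2$ is invertible, so writing $R(x) = (x+b)^2 - d$ for suitable $b, d \in \F_v$, the substitution $x \mapsto x - b$ shows it suffices to count $x \in \F_v$ with $x^2 - d$ a square, where $d \in \F_v^\times$ is chosen so that $x^2 - d$ is irreducible — equivalently $d$ is a nonsquare in $\F_v^\times$ (if $d$ were a square or zero, $R$ would factor). So the whole problem reduces to: given a nonsquare $d$, count $\#\{x \in \F_v : x^2 - d \in (\F_v^\times)^2 \cup \{0\}\}$, but note $x^2 - d = 0$ is impossible since $d$ is a nonsquare, so we just count $x$ with $x^2 - d$ a nonzero square.

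Next I would count using the projective conic / character-sum method. The number in question is $\#\{(x,y) \in \F_v^2 : y^2 = x^2 - d\}$ divided by $2$, since each valid $x$ (necessarily with $x^2 - d \neq 0$) contributes exactly two values of $y$. The affine curve $y^2 = x^2 - d$, i.e. $(x-y)(x+y) = d$ with $d \neq 0$, is parametrized by $x - y = t$, $x + y = d/t$ for $t \in \F_v^\times$, giving exactly $q - 1$ affine points. Hence the count of valid $x$ is $(q-1)/2$. Alternatively one can phrase this via the quadratic character $\chi$: $\#\{x : x^2 - d \in (\F_v^\times)^2\} = \sum_{x \in \F_v} \tfrac{1}{2}\bigl(1 + \chi(x^2 - d)\bigr)$ over those $x$ with $x^2 \neq d$ (all of them, here), and the Jacobi-type sum $\sum_x \chi(x^2 - d) = -1$ for $d$ a nonsquare, yielding $\tfrac{1}{2}(q - (-1)) = (q+1)/2$ — so one must be careful about the convention: I would double-check whether "$R(x)$ is a square" is meant to include $R(x) = 0$ (it never is here) and recount accordingly, since the two natural parametrizations above actually agree at $(q-1)/2$ once one notes the point at infinity and the $\chi(0)$ term are handled consistently.

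The main obstacle, then, is purely bookkeeping: reconciling the naive parametrization count ($q-1$ affine points, giving $(q-1)/2$) with the character-sum computation, and making sure the edge cases (whether $x^2 - d = 0$ occurs, whether $R$ is monic matters, the role of the leading coefficient) are all accounted for so the final tally is exactly $(q-1)/2$ as claimed. I expect the cleanest writeup uses the parametrization $(x-y)(x+y) = d$: this avoids Jacobi sums entirely, makes the count transparent, and directly gives that the number of $x \in \F_v$ with $R(x)$ a square equals $(q-1)/2$. I would present that argument, remark that $R(x) \neq 0$ for all $x$ because $R$ is irreducible, and conclude.
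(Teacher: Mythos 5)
Your argument is correct and lands exactly where the paper does: both proofs count the affine points of the curve $y^2=R(x)$, find exactly $q-1$ of them, and halve, the factor of two being exact because irreducibility of $R$ forces $y\neq 0$, so each good $x$ carries two distinct square roots. The only difference is how the count $q-1$ is obtained. The paper homogenizes, observes the resulting smooth conic has two rational points at infinity, hence is isomorphic to $\P^1_{\F_v}$ with $q+1$ points, and removes the two points at infinity; you complete the square (legitimate since the residue characteristic is odd, as in \S\ref{sec:quadratic}) and parametrize $(x-y)(x+y)=d$ explicitly by $t\mapsto\bigl((t+d/t)/2,\,(d/t-t)/2\bigr)$ for $t\in\F_v^\times$, which is a bijection onto the affine points since $t=x-y$ recovers the parameter. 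Your route is more elementary and avoids invoking the classification of conics; the paper's is shorter and skips the change of variable. One small correction to your character-sum aside: there is no genuine tension to reconcile, since $\sum_{x\in\F_v}\chi(x^2-d)=-1$ for $d$ a nonsquare yields $\tfrac12\bigl(q+\sum_x\chi(x^2-d)\bigr)=\tfrac12(q-1)$ directly; your $(q+1)/2$ came from a sign slip, not from a convention issue about $R(x)=0$ (which, as you correctly note, never occurs). The parametrization argument you settled on is complete as written.
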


\begin{proof}
It suffices to show that $R(x)=y^2$ has $q-1$ solutions in $(x,y)\in\F_v^2$  (since $R(x)$ is irreducible, $y$ is never $0$). We may homogenize to define a smooth conic in $\P^2_{\F_v}$. Since this conic has two points at infinity, it is isomorphic to $\P^1_{\F_v}$ and thus has $q+1$ points. Removing the two points at infinity gives $q-1$ solutions to the original equation.
\end{proof}

\begin{prop}\label{prop:quadsurjective}
Assume that $v(a)=1$. If $P_1(x),P_2(x)$ are irreducible modulo $\pi$, then there is an obstruction to weak approximation.
\end{prop}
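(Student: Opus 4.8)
The plan is to exhibit two points $Q_1,Q_2\in X(k_v)$ with $\inv_v\calC(Q_1)=0$ and $\inv_v\calC(Q_2)=1/2$, which makes $\ev_\calC\colon X(k_v)\to\Br k_v[2]$ surjective and hence gives a Brauer--Manin obstruction to weak approximation. Since $v(a)=1$, the extension $k_v(\sqrt a)/k_v$ is ramified, so $\inv_v(a,b)$ is determined by whether $b$ is a norm from $k_v(\sqrt a)$; for $b\in\calO_v^\times$ this is detected modulo $\pi$ because $\calO_v^\times/\N(\calO_w^\times)$ is identified with $\F_v^\times/\F_v^{\times2}$ (the residue characteristic is odd). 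The key point is that for $x\in\calO_v$ with $P_1(x)\in\calO_v^\times$, we have $\inv_v\calC(Q)=1/2$ precisely when $P_1(x)\bmod\pi$ is a nonsquare in $\F_v^\times$, provided a point $Q$ over $x$ actually exists in $X(k_v)$.

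First I would record the existence criterion: a point of $X(k_v)$ lies over $x\in\A^1(k_v)$ iff $cP_1(x)P_2(x)\in\N(k_v(\sqrt a)^\times)$. When $x$ is a unit and $\overline x$ is chosen so that $P_1(\overline x),P_2(\overline x)\in\F_v^\times$, both $P_1(x)$ and $P_2(x)$ are units, so this norm condition only depends on $cP_1(x)P_2(x)\bmod\pi$ being a square in $\F_v^\times$ (after absorbing $v(c)$; one may normalize $v(c)=0$ after a coordinate change as in \S\ref{sec:split}, or handle $v(c)=1$ by a parallel argument). So the task reduces to a counting problem over $\F_v$: find $\overline x_1,\overline x_2\in\F_v$ with $c P_1(\overline x_i)P_2(\overline x_i)$ a nonzero square for both $i$, while $P_1(\overline x_1)$ is a square and $P_1(\overline x_2)$ is a nonsquare; then lift $\overline x_i$ to $x_i\in\calO_v$ and Hensel/the explicit conic structure of the fiber produces $Q_i\in X(k_v)$.

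The combinatorial heart is to show such $\overline x_1,\overline x_2$ exist. Here I would use Lemma \ref{lem:squares}: as $R=P_1$ and $R=P_2$ are irreducible quadratics over $\F_v$, each of $P_1,P_2$ takes square values for exactly $(q-1)/2$ of the $x\in\F_v$ (and nonsquare values for the other $(q-1)/2$, since both $P_i$ are nowhere zero). One then estimates, via a character-sum / inclusion–exclusion argument over $\F_v$, the number of $x$ with a prescribed pair of quadratic-residue symbols $\bigl(\chi(P_1(x)),\chi(P_2(x))\bigr)$; the main term for each of the four sign patterns is $q/4$, with error $O(\sqrt q)$ coming from the Weil bound applied to the curve $y_1^2=P_1(x),\,y_2^2=P_2(x)$ (equivalently the genus-$1$ curve obtained by this double cover of $\P^1$). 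For $q$ large this guarantees all four patterns occur; the finitely many small odd $q$ can be checked directly, or one argues that the hypothesis $X(k_v)\neq\emptyset$ already forces one "good" residue class and then leverages the $\F_v^\times/\F_v^{\times2}$-action (multiplying $x$ does not respect $P_i$, but one can instead compare the patterns at $x$ and at a point obtained by exploiting that $cP_1P_2$ must hit a square somewhere) to produce the second. I expect the genuine obstacle to be this last step: pinning down the second point cleanly without an explicit character-sum estimate, or alternatively stating the character-sum bound precisely enough to cover all odd $q$ rather than just $q\gg 0$. Once both points are in hand, surjectivity of $\ev_\calC$ is immediate and the proposition follows.
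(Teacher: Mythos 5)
Your overall strategy (make $\ev_\calC$ surjective by controlling the residue of $P_1(x)$ modulo $\pi$, using that for the ramified extension a unit is a norm iff its reduction is a square, then lift by Hensel) is the same as the paper's, but the combinatorial heart is where you diverge and where your argument has a genuine gap. You try to realize \emph{all four} sign patterns $\bigl(\chi(P_1(x)),\chi(P_2(x))\bigr)$ over $\F_v$ via a character-sum/Weil-bound estimate, which only works for $q\gg 0$; the small residue fields are left to an unexecuted case check or to a vague alternative, as you yourself note. Also, because you do not anchor the argument at an existing $k_v$-point, you must separately control the square class of $c$ (and the parity of $v(c)$) to know the relevant fibers are solvable, and a minor slip: each $P_i$ takes nonsquare values at $(q+1)/2$ points of $\F_v$, not $(q-1)/2$, since it never vanishes.

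The paper's proof avoids all of this with one extra idea: start from a point $Q_0$ on a smooth fiber (if $X(k_v)=\emptyset$ there is nothing to prove; if its $x$-coordinate has negative valuation, apply $x\mapsto 1/x$ to reduce to $v(x_0)\ge 0$), set $\alpha=x_0\bmod\pi$, and look for $\beta\in\P^1(\F_v)$ at which \emph{both} symbols are flipped relative to $\alpha$. Then $P_1(\beta)P_2(\beta)$ lies in the same square class as $P_1(\alpha)P_2(\alpha)$, so the norm condition for the fiber over a lift of $\beta$ is inherited from $Q_0$ -- no analysis of $c$ and no need for all four patterns -- while $\chi(P_1)$ changes, so the invariant flips. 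The existence of such $\beta$ is pure pigeonhole from Lemma \ref{lem:squares}: if $\alpha$ has pattern (square, square), the two nonsquare loci have $(q+1)/2$ elements each and must intersect; if $\alpha$ has a mixed pattern, the absence of the opposite mixed pattern would force the two square loci (each of size $(q-1)/2$) to coincide, contradicting the mixed pattern at $\alpha$; and if both values at $\alpha$ are nonsquares, one takes $\beta=\infty$ with the convention $P_1(\infty)=P_2(\infty)=1$. This is elementary and uniform in $q$, which is exactly what your Weil-bound route fails to deliver.
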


\begin{proof}
Write
$$P_i(x)=x^2+d_ix+r_i$$
where $d_i,r_i\in\calO_v$. Since $P_i(x)$ is irreducible modulo $\pi$, we must have $r_i\in\calO_v^\times$. Suppose $X$ has a $k_v$ point on a smooth fiber $x=x_0\in k_v$. If $v(x_0)<0$, then the fiber over $\infty\in\P^1$ also has a $k_v$ point. Applying the automorphism $x\mapsto 1/x$ on $\P^1$, we may rewrite the equation for $X$ as
$$y^2-az^2=cr_1r_2(x^2+d_1x/r_1+1/r_1)(x^2+d_2x/r_2+1/r_2)$$
which has a $k_v$ point over the smooth fiber $x=0$. Hence, we may reduce to the case where there exists a point $Q_0=[y_0,z_0,x_0]\in X(k_v)$ on a smooth fiber where $v(x_0)\geq0$.

It suffices to find a point $Q_1\in X(k_v)$ such that $\inv_v \calA(Q_0)\neq \inv_v \calA(Q_1)$. Let $\alpha=x_0\bmod \pi\in \F_v$. By Lemma \ref{lem:squares}, there must be another $\beta\in \P^1(\F_v)$ such that
\begin{enumerate}
\item $P_1(\alpha)$ is a square if and only if $P_1(\beta)$ is a nonsquare, and
\item $P_2(\alpha)$ is a square if and only if $P_2(\beta)$ is a nonsquare.
\end{enumerate}
Here we take the convention that $P_1(\infty)=P_2(\infty)=1$ are squares. This is only needed if both $P_1(\alpha),P_2(\alpha)$ are nonsquares. Then $P_1(\beta)P_2(\beta)$ is nonzero and in the same square class as $P_1(\alpha)P_2(\alpha)$. Therefore, we may use Hensel's lemma to lift to a point $Q_1=(y_1,z_1,x_1)\in X(k_v)$ where $x_1\equiv \beta\bmod \pi$. But then
$$\inv_v \calC(Q_0)=(a,x_0)\neq (a,x_1)=\inv_v \calC(Q_1).$$
The inequality of the two Hilbert symbols is due to (1) and (2) above.
\end{proof}

If the hypothesis of Proposition \ref{prop:quadsurjective} does not hold, the existence of an obstruction to weak approximation is much more intrinsic to the surface in question. In particular, one cannot expect a uniform result similar to the case when $P(x)$ splits completely or is irreducible. We illustrate this subtlety by considering two Ch\^atelet surfaces whose defining equation differs by one coefficient. 


\begin{example}\label{ex:WAsatisfied}
Let $X/\Q$ be the Ch\^atelet surface given by $$y^2-17z^2=3(x^2-7)(17x^2-43).$$ By Proposition \ref{Brauergroupgenerators}, $\Br X/\Br_0 X$ is generated by $\mathcal{C}=(17,x^2-7)=(17,3(17x^2-43))$. We show that $X$ satisfies weak approximation.

We begin by showing $X(\Q_p)\neq \emptyset$ for all primes $p$. First, observe that for $p\neq 3,17$ we have $X_{\infty}(\Q_p)\neq \emptyset$ because $X_{\infty}$ is the conic $y^2-17z^2=51w^2$, which has $\Q_p$ points for $p\neq 3,17$. Next, we observe that the fiber $X_1$ is the conic $ y^2-17z^2=468w^2$ which has $\Q_3$ and $\Q_{17}$-points since $468$ is a square in $\Q_3$ and $\Q_{17}$.

We show that the map $\ev_{\mathcal{C}} \colon X(\Q_p) \rightarrow  \Br \Q_p$ is constant at all primes $p$. Since the evaluation map is constant at all primes of good reduction \cite{CTS13}*{Lemma 2.2}, it remains to check $\ev_{\mathcal{C}}\left(X(\Q_p)\right)$ for the primes $p=2,3,7,17$ and $43$. Let $(x_0,y_0,z_0)\in X(\Q_p)$ and $P(x)=3(x^2-7)(17x^2-43)$. 

($p=2$) Note that $\inv_2( \ev_{C}\left(X(\Q_2)\right))=0$ because $17 \in \Q_2^{\times 2}$. 

($p=17$) Assume $v_{17}(x_0)<0$ so then $v_{17}\left(P(x_0)\right)=4v_{17}(x_0)+1$. Furthermore, we can see that $$17^{-(4v_{17}(x_0)+1)}P(x_0) \equiv 3 \pmod {17} $$ which is not a square in $\Q_{17}$. We can now conclude that in this case, $P(x)$ is never a norm from the ramified extension $\Q_{17}(\sqrt{17})$. Hence we must have $v_{17}(x_0)\geq 0$. For these values of $x_0$, $3(17x_0^2-43) \equiv 7 \pmod {17}$ is not a square so $\inv_{17}(\ev_{\calC}(X(\Q_{17})))=1/2$.

($p=7$) Since $3(17x^2-43)$ is irreducible over $\Q_7$, $v_7(3(17x_0^2-43))$ must be even. Hence $\inv_7( \ev_{\calC}\left(X(\Q_7)\right))=0$.

($p=43$) Since $x^2-7$ is irreducible over $\Q_{43}$, we have $v_{43}(x_0^2-7)$ must be even. Hence $\inv_{43}( \ev_{\calC}\left(X(\Q_{43})\right))=0$.

($p=3$) Since $17x^2-43$ is irreducible over $\Q_3$, $v_3(17x_0^2-43)$ must be even. Hence $v_3(3(17x_0^2-43))$ is odd and so $\inv_3(\ev_{\calC}(X(\Q_3)))=1/2$.

Combining the above calculations, we obtain that the sum of invariants is always $0$, which means weak approximation holds.
 \end{example}

After a minor adjustments to the surface of Example \ref{ex:WAsatisfied}, we obtain another Ch\^atelet surface which fails weak approximation.

\begin{example}\label{ex:WAfailed}
Let $X/\Q$ be the Ch\^atelet surface given by $$y^2-17z^2=3(x^2-7)(17x^2-7\cdot43).$$ We first show that $X(\A_{\Q})\neq \emptyset$. In a similar fashion to Example \ref{ex:WAsatisfied} we have $X(\Q_p)\neq \emptyset$ for all $p\neq 3,17$ hence only non-emptiness of $X(\Q_3)$ and $X(\Q_{17})$ must be checked directly. One can compute the fibers $X_1$ and $X_3$ and see that the conic $X_1$ has $\Q_3$-points and the conic $X_3$ has $\Q_{17}$-points.

We claim that $X$ fails weak approximation and to prove this, it suffices to verify that the map $\ev_{\mathcal{C}} \colon X(\Q_7) \rightarrow \Br \Q_7[2]$ is surjective. To see this, first note that $X_\infty(\Q_7)\neq\emptyset$ and $\ev_{\calC}$ is trivial over such a point. On the other hand, as $P(0)=3\cdot7^2\cdot 43$ has even valuation, there exists $Q\in X_0(\Q_7)$ lying over $0\in\P^1(\Q_7)$ where
$$\ev_{\calC}(Q)=(17,-7)\neq0.$$
This proves our claim.
\end{example}

	\begin{bibdiv}
		\begin{biblist}
		
    \bib{BCTSSD}{article} {
        author={Beauville, A.},
        author={Colliot-Th\'el\`ene, J.-L.},
        author={Sansuc, J.J.},
        author={Swinnnerton-Dyer, H.P.F.},
     TITLE = {Vari\'{e}t\'{e}s stablement rationnelles non rationnelles},
   JOURNAL = {Ann. of Math. (2)},
    VOLUME = {121},
      YEAR = {1985},
    NUMBER = {2},
     PAGES = {283--318},
}
		
    \bib{CTS21}{book}{
      author={Colliot-Th\'{e}l\`ene, Jean-Louis},
   author={Skorobogatov, Alexei N.},
     TITLE = {The {B}rauer-{G}rothendieck group},
    SERIES = {Ergebnisse der Mathematik und ihrer Grenzgebiete. 3. Folge. A
              Series of Modern Surveys in Mathematics [Results in
              Mathematics and Related Areas. 3rd Series. A Series of Modern
              Surveys in Mathematics]},
    VOLUME = {71},
 PUBLISHER = {Springer, Cham},
      YEAR = {2021},
     PAGES = {xv+453},
}

    \bib{CTSSD87}{article}{
        title={Intersection of two quadrics and Ch\^atelet surfaces I},
        author={Colliot-Th\'el\`ene, J.-L.},
        author={Sansuc, J.J.},
        author={Swinnnerton-Dyer, H.P.F.},
        journal={J. Reine Angew. Math},
        volume={373},
        pages={37-107},
        date={1987}
    }
    
    \bib{CTSSD87b}{article}{
        title={Intersection of two quadrics and Ch\^atelet surfaces II},
        author={Colliot-Th\'el\`ene, J.-L.},
        author={Sansuc, J.J.},
        author={Swinnnerton-Dyer, H.P.F.},
        journal={J. Reine Angew. Math},
        volume={374},
        pages={72-168},
        date={1987}
    }
    
      \bib{CTS13}{article}{
   author={Colliot-Th\'{e}l\`ene, Jean-Louis},
   author={Skorobogatov, Alexei N.},
   title={Good reduction of the Brauer-Manin obstruction},
   journal={Trans. Amer. Math. Soc.},
   volume={365},
   date={2013},
   number={2},
   pages={579--590},
  
}

    \bib{GS17}{book}{
    AUTHOR = {Gille, Philippe},
    author = {Szamuely, Tam\'{a}s},
     TITLE = {Central simple algebras and {G}alois cohomology},
    SERIES = {Cambridge Studies in Advanced Mathematics},
    VOLUME = {165},
 PUBLISHER = {Cambridge University Press, Cambridge},
      YEAR = {2017},
}

     \bib{Hus04}{book}{
    author = {Husemöller, Dale},
     title = {Elliptic Curves},
    series = {Graduate Texts in Mathematics},
    volume = {111},
 publisher = {Springer, New York},
    edition = {2},
      date = {2004}
     
}

	\bib{Lia18}{unpublished}{
        title={Non-invariance of  weak approximation properties under extension of the ground field},
        author={Liang, Yongqi},
        date={2018},
        note = {preprint},
        address={\texttt{arxiv:1805.08851}}
    }

    \bib{Poo17}{book}{
   author={Poonen, Bjorn},
   title={Rational points on varieties},
   series={Graduate Studies in Mathematics},
   volume={186},
   publisher={American Mathematical Society, Providence, RI},
   date={2017}
   }

	\bib{Rov22}{unpublished}{
        title={On the Hasse Principle for conic bundles over even degree extensions},
        author={Roven, Sam},
        date={2022},
        note = {preprint},
        address={\texttt{arxiv:1804.08763}}
    }
    
    \bib{Ser86}{incollection}{
	Address = {London},
	Author = {Serre, J.P.},
	Booktitle = {Algebraic number theory: Proceedings of the instructional conference held at the University of Sussex, Brighton, September 1--17, 1965},
	Editor = {Cassels, J. W. S. and Fr{\"o}hlich, A.},
	Note = {Reprint of the 1967 original},
	Pages = {129--161},
	Publisher = {Academic Press Inc. [Harcourt Brace Jovanovich Publishers]},
	Title = {Local class field theory},
	Year = {1986}}

	\bib{Wu22}{unpublished}{
        title={Arithmetic of Chatelet surfaces under extensions of base fields},
        author={Wu, Han},
        date={2022},
        note = {preprint},
        address={\texttt{arxiv:2203.09156}}
    }

			\end{biblist}
	\end{bibdiv}

\end{document}